\definecolor{darkred}{rgb}{.7,0,0}
\definecolor{darkgreen}{rgb}{0,0.7,0}
\definecolor{darkblue}{rgb}{0,0,0.7}
\newcommand{\iid}{i.i.d.\,}
\newcommand{\yd}{y^{\dagger}}
\newcommand{\rd}{\rho^{\dagger}}
\newcommand{\ud}{u^{\dagger}}
\newcommand{\xid}{\xi^{\dagger}}
\newcommand{\etd}{\eta^{\dagger}}
\newcommand{\ztd}{\zeta^{(n)}}
\newcommand{\he}{\widehat{e}}
\newcommand{\mfC}{\mathfrak C}
\newcommand{\mfz}{\mathfrak z}
\newcommand{\tp}{\widetilde q}
\newcommand{\dtv}{d_{\rm TV}}
\newcommand{\ftilde}{\tilde{f}}
\newcommand{\vtilde}{\tilde{v}}
\newcommand{\murec}{{\mu}}
\newcommand{\unit}{\mathbb{I}}
\title{Ergodicity and Accuracy of Optimal Particle Filters\\ 
for Bayesian Data Assimilation} 
\author{D. Kelly$^\star$ and  A.M. Stuart$^\dagger$}
\institute{$^\star$ New York University, \email{dtbkelly@gmail.com} \\ $^\dagger$ California Institute of Technology, \email{astuart@caltech.edu}}
\newcommand{\vhat}{\widehat{v}}
\newcommand{\uhat}{\widehat{u}}
\newcommand{\CY}{\mathcal{Y}}
\newcommand{\CZ}{\mathcal{Z}}
\newcommand{\bbP}{\mathbb{P}}
\newcommand{\bbQ}{\mathbb{Q}}
\newcommand{\bbE}{\mathbb{E}}
\newcommand{\bbI}{\mathbb{I}}
\newcommand{\pchain}{z}
\newcommand{\pchaintilde}{{z}}
\newcommand{\CM}{\mathcal{M}}
\newcommand{\CX}{\mathcal{X}}
\newcommand{\bpfw}{\widehat{\rho}}
\newcommand{\bpfe}{\rho}
\newcommand{\opfw}{\widehat{\mu}}
\newcommand{\opfe}{\mu}
\newcommand{\sirw}{\widehat{\mu}}
\newcommand{\sire}{\mu}
\newcommand{\gopf}{\nu}
\begin{document}
\maketitle

\begin{abstract}
Data assimilation refers to 
the methodology of combining dynamical models and 
observed data with the objective of improving state estimation. Most data 
assimilation algorithms are viewed as approximations of the Bayesian 
posterior (filtering distribution) on the signal given the observations.
Some of these approximations are controlled, 
such as particle filters which may be 
refined to produce the true filtering distribution in the large particle number
limit, and some are uncontrolled, such as ensemble Kalman filter
methods which do not recover the true filtering distribution in the large
ensemble limit. Other data assimilation algorithms, such as cycled 3DVAR 
methods, may be thought of as controlled estimators of the state, in the
small observational noise scenario, but are also uncontrolled in general
in relation to the true filtering distribution. For particle filters and ensemble Kalman filters it is of practical 
importance to understand how and why data assimilation methods can be 
effective when used with a fixed small number of particles, since 
for many large-scale applications it is not practical to deploy 
algorithms close to the large particle limit asymptotic. 
In this paper we address this question for particle filters and, in particular, 
study their accuracy (in the small noise limit)
and ergodicity (for noisy signal and observation)
without appealing to the large particle 
number limit. We first overview the accuracy and minorization 
properties for the true filtering distribution, working in the setting of 
conditional Gaussianity for the dynamics-observation model. We then show that 
these properties are inherited by optimal particle filters 
for any fixed number of particles, and use the minorization
to establish ergodicity of the filters. For completeness
we also prove large particle number consistency results 
for the optimal particle filters, by writing the update 
equations for the underlying distributions as recursions. 
In addition to looking at the optimal particle filter with standard 
resampling, we derive all the above results for (what we term) the 
Gaussianized optimal particle filter and show that the 
theoretical properties are favorable for this method,
when compared to the standard optimal particle filter.
\end{abstract}

\section{Introduction}

\subsection{Background and Literature Review}

Data assimilation describes the blending of dynamical models with data, with the 
objective of improving state estimation and forecasts. The use of data 
assimilation originated in the geophysical sciences, but
is now ubiquitous in engineering and the applied sciences. In numerical weather 
prediction, large scale ocean-atmosphere models are assimilated with massive data 
sets, comprising observational data from satellites, ground based weather stations
and underwater sensors for example \cite{bauer2015quiet}. Data assimilation is 
prevalent in robotics; the SLAM problem seeks to use sensory data made by robots in an unknown environment to create a map of that environment and locate the robot within it \cite{thrun2002particle}. It is used
in modelling of traffic flow \cite{bayen}.
And data assimilation is being used in
bio-medical applications such as glucose-insulin systems \cite{sedigh2012data}
and the sleep cycle \cite{sedigh2012reconstructing}. These examples
serve to illustrate the growth in the use of the methodology, its breadth
of applicability and the very different levels of fidelity present in the
models and the data in these many applications.
\par
Although typical data assimilation problems can be understood from a Bayesian perspective, for non-linear and potentially high dimensional models it is often infeasible to make useful exact computations with the posterior. To circumvent this problem, practitioners have developed assimilation methods that approximate the true posterior, but for which computations are more feasible. In the engineering communities, particle filters have been developed for this purpose, providing empirical approximations of non-Gaussian posteriors \cite{doucet2000sequential,doucet2001introduction}. In the geoscience communities, methods are typically built on Kalman filtering theory, after making suitable Gaussian approximations \cite{LSZ15}; such methods include variational methods like 3DVAR and 4DVAR \cite{ghil1991data,lorenc1986analysis}, the extended Kalman filter (ExKF) \cite{evensen1992using} and the ensemble Kalman filter (EnKF) \cite{burgers1998analysis, evensen2003ensemble}. For these methods the underlying Gaussian
ansatz render them, in general, invalid as approximations of the true filtering
distribution \cite{law2014deterministic}.   
\par
Despite their widespread use, many of these algorithms used in geophysical
applications remain mysterious from a theoretical perspective. At the heart of the mystery is the fact that data assimilation methods are frequently and successfully implemented in regimes where the approximate filter is not provably valid; it is not known which features of the posterior (the true filtering distribution) are reflected in the approximation and which are not. For example, the ensemble Kalman filter is often implemented with ensemble 
size several orders of magnitude smaller than needed to reproduce large sample
size behaviour, and is applied to problems for which the Gaussian ansatz may not
be valid; it nonetheless can still exhibit skillful state estimates, with high correlations between the estimate and true trajectories \cite{houtekamer2005ensemble,majda2012filtering}. Indeed, the success of the methods in this non-asymptotic regime is the crux of their success; the methods would often be computationally 
intractable at large ensemble sizes.
\par
This lack of theory has motivated recent efforts to better understand the properties of data assimilation methods in the practical, non-asymptotic regimes. The 3DVAR algorithm has been investigated in the context of toy models for numerical weather prediction, including the Lorenz-63 \cite{law2012analysis}, Lorenz-96 \cite{law2014filter} and $2d$ Navier-Stokes equations \cite{law2013accuracy};
see also \cite{moodey2013nonlinear}. These works focus primarily on the question of accuracy -- how well does the state estimate track the true underlying signal. Accuracy for the EnKF with fixed ensemble size was first investigated in \cite{kelly2014well}; the study of accuracy for the EnKF was further developed in \cite{tong2016enkf} using linear models with random coefficients, but much more realistic (practical) assumptions on observations than \cite{kelly2014well}, and moreover focussing on covariance consistency through the Mahalanobis norm. The articles \cite{tong2016nonlinear,tong2015nonlinear} were the first to investigate the stability of EnKF with fixed ensemble size, by formulating the filter as a Markov chain and applying coupling techniques; here by stability we mean 
robustness with respect to initialization, and study the issue through the
lens of ergodicity. This line of research has been continued in \cite{del2016stability} by framing the EnKF as a McKean-Vlasov system. The limitations of the non-practical regimes have also been investigated; in \cite{kelly2015concrete} the authors construct simple dissipative dynamical models for which the EnKF is shown to be highly unstable with respect to initial perturbations. This was the first theoretical insight into the frequently observed effect of catastrophic filter divergence \cite{harlim2010catastrophic}. 
\par
For the nonlinear filtering distribution itself, there has been a great deal of research over the last several decades, particularly on the question of stability. Conditional ergodicity for the filtering distribution for general nonlinear hidden Markov models has been investigated in \cite{kunita1971asymptotic} and later refined in \cite{van2009stability}. Ergodicity for nonlinear filters has been discussed in \cite{kleptsyna2008discrete,douc2009forgetting,crisan2008stability} and exponential convergence results were first obtained in \cite{atar1997exponential,budhiraja1997exponential}.  
  
\par
\subsection{Our Contributions}
For particle filters, much of the theoretical literature focuses on the question of consistency in the large ensemble limit, that is, does the empirical approximation converge to the true posterior as the number of particles in the ensemble $N$ approaches infinity. However, in many high dimensional applications
such as robotics \cite{thrun2002particle} and ocean-atmosphere forecasting \cite{van2009particle}, 
particle filters are implemented in the non-asymptotic regime. Indeed
in the geosciences, new filtering algorithms have been proposed to beat the curse of dimensionality and are implemented with ensemble sizes many orders of magnitude smaller than the state dimension \cite{van2010nonlinear}. In this article 
we contribute to the program of analyzing filtering algorithms in practical 
small ensemble regimes, focusing on the accuracy and stability of particle 
filters for fixed ensemble sizes. In particular, we address the following 
question concerning the long-time behaviour of the particle filters: if it 
is known that the true posterior distribution is accurate and satisfies 
a minorization condition, can accuracy and conditional ergodicity be proved 
for the approximate filter?
\par
We focus our attention on the optimal particle filter (OPF) \cite{akashi1977random,zaritskii1975monte,liu1995blind}. The OPF is a sequential importance sampling procedure in which particle updates are proposed using a convex combination of the model prediction and the observational data at the next time step. For details on the OPF, including the justification for calling it optimal, see \cite{doucet2000sequential}. There are two main reasons that we focus our attention on the OPF. First, the optimal particle filter is known to compare favorably to the standard particle filter, particularly from the perspective of weight degeneracy in high dimensions \cite{snyder2011particle,snyder2015performance}. Indeed the optimal particle filter can be considered a special case of more complicated filters that have been proposed to beat the curse of dimensionality \cite{chorin2009implicit,van2010nonlinear}. Secondly, under natural assumptions on the dynamics-observation model, the optimal particle filter can be formulated as a random dynamical system which is very similar to the 3DVAR algorithm. This means that techniques for proving accuracy for the 3DVAR filter in earlier literature \cite{sanz2015long} can be leveraged for the OPF.     
\par
Throughout the article, we make the assumption of conditional Gaussianity for the dynamics-observation model. 
This framework is frequently employed in practice, particularly in geoscience data assimilation problems. Under this assumption, we show that the true posterior, the filtering distribution, satisfies the properties of long-time accuracy 
and of minorization. 
The accuracy result states that, if sufficiently many variables are observed,  
the posterior will concentrate around the true trajectory in the long time 
limit. The minorization result shows that the transition kernel of the
nonlinear Markov process generating the filtering distribution is bounded
below, uniformly in time, by a time-dependent multiple of a fixed 
time-independent probability measure. Minorization may be used
to prove coupling and ergodicity 
in linear Markov processes and this fact will be
exploited when we study particle filters.  
Conditional ergodicity results exploiting coupling are obtained under quite 
general assumptions in \cite{tong2012ergodicity,van2009stability}. 
\par
Having introduced concepts in the context of the filtering distribution
itself, we go on to show that, under the same model-observation assumptions, the OPF exhibits the long-time properties of conditional ergodicity and accuracy for any fixed ensemble size. For the conditional ergodicity result, we show that 
two copies of the particle ensembles, initialized differently, but updated with the same observational data, will converge to each other in the long term limit, in a distributional sense. The accuracy result shows that all ensemble
members in the particle filter will concentrate near the true signal
underlying the data, in the large-time regime. Both the accuracy and
ergodicity results use very similar arguments to those employed for the 
analysis of the filtering distribution itself. 
In recent work, ergodicity has been used to study the long-run asymptotic
behaviour of particle based optimization algorithms \cite{tadic2018asymptotic},
with motivation taken from parameter estimation in partially observed
dynamical systems \cite{poyiadjis2011particle}.
\par
In addition, we also establish large ensemble consistency results for the OPF. Here we employ a technique exposed very clearly in \cite{vanhandel15}, which finds a recursion that is approximately satisfied by the bootstrap particle filter, and leverages this fact to obtain an estimate on the distance between the true posterior and the empirical approximation. We show that the same idea can be applied to not only the OPF, but a very large class of sequential importance sampling procedures. We note that large particle consistency results for particle filters should not be considered practical results for high dimensional data assimilation problems, as in practice particle filters are never implemented in this regime. The consistency results are included here as they are practically informative for low dimensional data assimilation problems and moreover as the results are natural consequences of the random dynamical system formulation that has been adopted for accuracy and ergodicity results. For high dimensional data assimilation problems, it may be more practical to look at covariance consistency, 
as done in \cite{tong2016enkf}. We also note that quite general consistency
results  are proved for optimal particle filters 
in the paper \cite{johansen2008note}, using the setting of auxiliary particle
filters introduced in \cite{pitt1999filtering}; this work was taken
further in \cite{douc2009optimality}.  
\par
As well as obtaining results concerning the stability, accuracy and consistency for the OPF, for which we perform resampling at the end of each assimilation cycle, we also prove the corresponding results for the so called \emph{Gaussianized} OPF. The terminology Gaussianized OPF was introduced in \cite{kelly16b}, but the
idea was introduced two decades ago in \cite{pitt1999filtering} in the
context of the auxiliary particle
filter; see also the paper \cite{liu1995blind}. 
The method differs from the OPF only in the implementation of the resampling. Nevertheless, it was shown numerically in \cite{kelly16b} that the GOPF compares favorably to the OPF, particularly when applied to high dimensional models. The analysis in this article lends theoretical weight to the advantages of the GOPF over the OPF. In particular we find that 
the upper bound on the convergence rate for conditional ergodicity for the GOPF 
has favourable dependence on dimension when compared with the OPF. 
\par
\label{sec:intro}\label{s:intro}
\subsection{Structure of Article and Notation}\label{s:notation}
The remainder of the article is structured as follows. At the end 
of this section we introduce some notation and terminology that will be useful in the sequel. In Section \ref{s:bayes}, we formulate the Bayesian problem of data assimilation, introduce the model-observation assumptions under which we
work, and prove the accuracy and minorization results for the true posterior. In Section \ref{s:pf}, we introduce the bootstrap particle filter, optimal particle filter and Gaussianized optimal particle filter. In Section \ref{s:ergo}, we prove the conditional ergodicity results for the optimal particle filters. In Section \ref{s:acc}, we prove the accuracy results for the optimal particle filters. Finally, in Section \ref{s:consist}, we prove the consistency results for the optimal particle filters.  

Throughout we let $\CX$ denote the finite dimensional Euclidean state space and
and we let $\CY$ denote the finite dimensional Euclidean observation space.
We write $\CM(\CX)$ for the set of probability measures on $\CX$. We denote the Euclidean norm on $\CX$ by $|\cdot|$ and for a symmetric positive definite matrix $A \in L(\CX,\CX)$, we define $|\cdot|_{A} = |A^{-1/2}\cdot |$. 
The notation $P(a|b)$ will denote the density of random variable
$a$, conditioned on known $b$. Transition kernels $q(a,\cdot)$
will denote transitions from point $a$, being measures when the second argument
is a Borel set in $\CX$, and being densities when the second argument 
is an element of $\CX$. 
The kernel $q$ will also be appended with subscript $k$ when emphasizing 
that it is inhomogeneous in time. 
A similar notation will be used for kernels $q_{k}$
on space $\CX^N.$ Superscript $k$ will be used in $q^k$ to denote
$k-$fold composition of the kernels. Finally we define $S^N : \CM(\CX) \to \CM(\CX)$ to be the sampling operator $S^N \mu = \frac{1}{N}\sum_{n=1}^N \delta_{u^{(n)}}$ where $u^{(n)}\sim\mu$ are \iid random variables.

\section{Bayesian Data Assimilation}
\label{sec:bayes}\label{s:bayes}

We describe the set-up which encompasses all the work in this paper,
and then study the minorization condition and accuracy for the true
filtering distribution.

\subsection{Set-Up}
The state model is taken to be a discrete time Markov chain $\{u_k\}_{k\geq0}$ taking values in the state space $\CX$. We assume that the initial condition 
$u_0$ of the chain is distributed according to $\mu_0$, where $\mu_0 \in \CM(\CX)$. The transition kernel for the Markov chain is given by $P(u_{k+1} | u_k)$. For each $k \geq 1$, we make an observation of the Markov chain 
\begin{equ}
\label{eq:obsg}
y_{k+1}  = h(u_{k+1}) + \eta_{k+1}\;,
\end{equ}  
where $h : \CX \to \CY$ maps the state space to the observation space, and 
$\eta_k \sim N(0,\Gamma)$ are centred \iid random variables representing
observational noise. We denote by $Y_k=(y_1,\dots, y_k)$ the accumulated observational data up to time $k$. We are interested in studying and approximating the filtering distribution $\mu_k (\cdot) = \bbP (u_k \in \cdot | Y_k)$ for all $k \geq 1$. We will denote the density of $\mu_k$ by $P(u_k | Y_k)$.  
\par
Although we will not use it for all of the consistency results at the
end of the paper, for the ergodicity and accuracy results
we will always require the following additional assumptions on the 
dynamics-observation model:
\begin{ass}\label{ass:cond_gauss} Let $\psi:\CX \mapsto \CX$ be bounded.
The dynamics-observation model is given by
\begin{subequations}
\label{eq:11}
\begin{align}
u_{k+1}&=\psi(u_{k})+\xi_k, \label{eq:11a}\\
y_{k+1}&=Hu_{k+1}+\eta_{k+1}, \label{eq:11b}
\end{align}
\end{subequations}
where $u_0 \sim \mu_0, \xi_k \sim N(0,\Sigma)$ i.i.d., 
$\eta_k \sim N(0,\Gamma)$ i.i.d. and $u_0, \{\xi_k\}$ and $\{\eta_k\}$ are
independent. We write $\Sigma=\sigma^2 \Sigma_0$ and $\Gamma=\gamma^2 \Gamma_0$ and
require that $\Sigma_0$ and $\Gamma_0$ are strictly positive-definite, and that
$\sigma,\gamma \in (0,\infty)$ so that $r:=\sigma/\gamma \in (0,\infty).$ 
\end{ass}

\par
For most of the results in this article we will be interested in properties 
of the conditional distributions $P(u_k | Y_k)$, and particle approximations
of it, when the observational data $Y_k$ is generated by a fixed realization of
the model. For this reason, we introduce the following notation to emphasize 
that we are considering a fixed realization of the data, generated by a fixed 
trajectory of the underlying dynamical system. 

\begin{ass}\label{ass:dagger}
Fix $\ud_0 \in \CX$ and positive semi-definite matrices $\Sigma_*$ and $
\Gamma_*$ on $\CX$ and $\CY$ respectively.
Let $\{\ud_k\}$ be a realization of the dynamics satisfying
\[
\ud_{k+1}=\psi(\ud_k)+r\gamma \xid_k
\]
where $\ud_0 \in \CX$ is fixed and  $\xid_k \sim N(0,\Sigma_*)$  i.i.d. 
Similarly define $\{\yd_k\}$ by 
\begin{equ}\label{e:y_dagger}
\yd_{k+1}=H\ud_{k+1}+\gamma \etd_{k+1}
\end{equ}
where $\etd_{k+1} \sim N(0,\Gamma_*)$ i.i.d. and $\{\xi_k\}, \{\eta_k\}$
are independent. We will refer to $\{\ud_k\}_{k\geq0}$ as the \emph{true signal} and $\{\yd_k\}_{k\geq1}$ as the given \emph{fixed data}. As above, we use the shorthand $Y_k^\dagger = \{\yd_i\}_{i=1}^k$.
\end{ass}

\begin{rmk}
Note that this data is not necessarily generated from the same statistical
model used to define the filtering distribution both since $r^2\gamma^2\Sigma_*$ 
and $\gamma^2\Gamma_*$
may differ from $\Sigma$ and $\Gamma$, and since the
initial condition is fixed.
However the covariance structures match if we 
define $\sigma:=r\gamma$, 
$\Sigma_*=\Sigma_0$ and $\Gamma_*=\Gamma_0$. When studying accuracy,
we will consider families of data sets and truths parameterized by 
$\gamma \to 0$; in this setting it is natural to
think of $r$ and the noise sequences $\{\xid_k\}_{k\geq0}$ and 
$\{\etd_k\}_{k\geq0}$ as fixed, whilst the true signal and fixed data 
sequences will depend on the value of $\gamma$.
$\qed$
\end{rmk}

The filtering distribution evolves according to the iteration 
\begin{equ}\label{e:bpf_rec2}
\mu_{k+1} = L_{k+1} P \mu_k
\end{equ}
where $P$ and $L_{k+1}$ are maps on the space of measures defined as follows.
The linear map $P: \CM(\CX) \to \CM(\CX)$ is
the Markov semigroup
\[
P \nu (A) = \int_A P(u_{k+1} | u_k) \nu(du_k).
\]
We define the nonlinear likelihood multiplication operator 
$L_{k+1} : \CM(\CX)\to \CM(\CX)$ by  
\begin{equ}
L_{k+1} \nu (A) = \frac{\int_A P(y_{k+1} | u_{k+1}) \nu(du_{k+1})}{\int_\CX P(y_{k+1} | u_{k+1}) \nu(du_{k+1})}
\end{equ}
for each $A \subset \CX$ measurable. 
Equation \eqref{e:bpf_rec2} simply represents application of 
Bayes' formula with prior $P\mu_k$ and likelihood $P(y_{k+1} | u_{k+1}).$ 

Expressed in terms of densities \eqref{e:bpf_rec2} becomes 
\begin{equation}\label{e:bpf_bayes}
\begin{aligned}
P(u_{k+1}|Y_{k+1}) &= P(u_{k+1}|Y_k,y_{k+1})\\
&=\frac{1}{P(y_{k+1}|Y_k)} P(y_{k+1}|u_{k+1},Y_k)P(u_{k+1}|Y_k)\\
&=\int_{\CX} \frac{1}{P(y_{k+1}|Y_k)} P(y_{k+1}|u_{k+1},u_{k},Y_{k})P(u_{k+1}|u_k,Y_k)P(u_k|Y_k)du_k\\
&=\int_{\CX} \frac{1}{P(y_{k+1}|Y_k)} P(y_{k+1}|u_{k+1})P(u_{k+1}|u_k)P(u_k|Y_k)du_k.
\end{aligned}
\end{equation}
Thus we may write
\begin{equation}
\label{eq:markovin}
P(u_{k+1}|Y_{k+1})=\int_{\CX} q_{k+1}(u_k,u_{k+1})P(u_k|Y_k)du_k
\end{equation}
where the transition density $q_{k+1}$ has the form
\begin{equation}
\label{eq:mightneed}
q_{k+1}(u_k,u_{k+1})=\frac{1}{Z}
 \exp\left( -\frac{1}{2} | y_{k+1} - H u_{k+1}  |_\Gamma^2 - \frac{1}{2} | u_{k+1} - \psi(u_k) |_\Sigma^2 \right).
\end{equation}
If we define
\begin{equation*}
q_{k+1}^0(u_k,u_{k+1})=
 \exp\left( -\frac{1}{2} | y_{k+1} - H u_{k+1}  |_\Gamma^2 - \frac{1}{2} | u_{k+1} - \psi(u_k) |_\Sigma^2 \right).
\end{equation*}
then we see that
\begin{equation}
\label{eq:zee}
Z=\int_{\CX}\int_{\CX} q_{k+1}^0(u_k,u_{k+1}) P(u_k|Y_k)du_k du_{k+1}.
\end{equation}
Note the inhomogeneous and nonlinear nature of the Markov
chain reflected in the fact that $Z$ depends on 
$y_{k+1}, u_k$, and on $P(u_k|Y_k).$ 
Despite this dependence, the normalization constant $Z$ can be bounded above
independently of $k$. To see this note that \eqref{eq:zee} gives
\begin{equation*}
Z \le \sqrt{(2\pi)^{d} \det \Sigma}\int_{\CX}\int_{\CX} \frac{1}{\sqrt{(2\pi)^{d} \det \Sigma}}\exp\left( - \frac{1}{2} | u_{k+1} - \psi(u_k) |_\Sigma^2 \right) P(u_k|Y_k)du_k du_{k+1}.
\end{equation*}
Integrating over $u_{k+1}$ first, and then over $u_k$, gives
\begin{equation}
\label{eq:mightneed2}
Z \le \sqrt{(2\pi)^{d} \det \Sigma}.
\end{equation}

In the next two subsections we state two theorems concerning the minorization
conditions and accuracy of 
the filtering distribution itself, followed by a subsection which
elaborates the connection between the optimal particle filter
and the 3DVAR algorithm. 
The remainder of the paper is devoted to establishing analogous
results for various particle filters.
\par
\subsection{Minorization Condition}
\label{ssec:CE}

The result of this subsection is:

\begin{thm}
\label{t:1}\label{thm:bayes_erg}
Consider the filtering distributions $\mu_k$ under 
Assumption \ref{ass:cond_gauss}. 
Assume moreover that the observational data used to define the 
filtering distribution is 
given by $\{\yd_k\}_{k\geq 1}$ from Assumption \ref{ass:dagger}.
Consider the transition  
kernel $q_{k+1}(u_k,\cdot)$ viewed as a random measure, parameterized
by the random observational data. Then
there is a probabiltiy measure $\bbQ \in \CM(\CX)$ 
and a sequence of random constants $\epsilon_k > 0$, defined by
the observational data, such that, for
all Borel sets $A$ in $\CX$, 
\begin{equ}\label{e:bayes_minor}
q_{k+1}(u,A) \geq \epsilon_k \bbQ(A).
\end{equ}
\end{thm}

\begin{proof}
Recall Assumption \ref{ass:cond_gauss} and define 
$\rd_{k,0}=\sigma H\xid_k+ \gamma \etd_{k+1}$ where
$\xid_k \sim N(0,\Sigma_*)$ i.i.d.\,,
$\etd_k \sim N(0,\Gamma_*)$ i.i.d.\,.
Recalling \eqref{eq:mightneed}, \eqref{eq:mightneed2} we obtain
the lower bound 
\begin{align}
\sqrt{(2\pi)^{d} \det \Sigma}\,\, q_{k+1}(u,dv)&= 
\frac{\sqrt{(2\pi)^{d} \det \Sigma}}{Z}
 \exp\left( -\frac{1}{2} | y_{k+1} - H v  |_\Gamma^2 - \frac{1}{2} | v - \psi(u) |_\Sigma^2 \right)dv\\
&\ge  \exp\Bigl(-\frac12|H\psi(\ud_k)+\rd_{k,0}-Hv|^2_{\Gamma}-\frac12
|v-\psi(u)|_{\Sigma}^2\Bigr)dv\\
&\ge \exp\Bigl(-2|H\psi(\ud_k)|^2_{\Gamma}-|\psi(u)|^2_{\Sigma}-2|\rd_{k,0}|^2_{\Gamma}-|Hv|_{\Gamma}^2-|v|_{\Sigma}^2\Bigr)dv\\
&\ge \exp\Bigl(-\lambda^2-2|\rd_{k,0}|^2_{\Gamma}\Bigr)\exp\Bigl(-\frac12|v|_{D}^2\Bigr)dv\\
\end{align}
where
\begin{equ}
\lambda^2=\sup_{u,v} \Bigl(2|H\psi(v)|^2_{\Gamma}+|\psi(u)|^2_{\Sigma}\Bigr)
\end{equ}
and
\[
\frac12 D^{-1}=\Sigma^{-1}+H^*\Gamma^{-1}H\;.
\]
Thus we have a minorization condition of the form
\eqref{e:bayes_minor}
where $\bbQ(\cdot)$ is the Gaussian $N(0,D)$ and
the data-dependent random constants $\epsilon_k$ are given by
$$\epsilon_k=\frac{\sqrt{\det D}}{\sqrt{\det \Sigma}}\exp\Bigl(-\lambda^2-2|\rd_{k,0}|^2_{\Gamma}\Bigr).$$
\end{proof}

\begin{rmk}
For linear Markov processes the existence of a minorization condition
leads in a straightforward way to ergodicity via coupling arguments.
In these arguments a new Markov chain, equivalent in law to the original
one, is used; in this new Markov chain
moves are made according to kernel $\bbQ$ with
probability $\epsilon_k$, being governed by a Bernoulli process.
In the linear case it is posible to fix a realization of the Bernoulli
process and then average with respect to it and obtain a process
equivalent to the original one. This facilitates coupling.
For nonlinear Markov processes fixing the Bernoulli process and then
averaging does not lead to a process equivalent to the original one
and so coupling arguments are more complex; see, for example,
\cite{douc2009forgetting}. The filtering distribution
is governed by a nonlinear Markov process and hence we do not prove
ergodicity. However for the particle filters studied in later sections
we work with linear Markov processes governing the particle ensemble;
the proof of minorization is structurally similar to that in the
preceding theorem and this is why we include the preceding theorem
here. 
\end{rmk}

\subsection{Accuracy}
We now discuss accuracy of the posterior filtering distribution in
the small noise limit $\gamma \ll 1$. The assumptions are somewhat
restrictive, but give a flavour of what can be achieved; more careful
use of ideas from control theory, such as observability, detectability
and stabilizibility may lead to improved results, similar in flavour.
The result uses the 3DVAR filter as an upper bound, and in the next
subsection we will also show that the 3DVAR filter connects very
naturally with the filtering distribution itself, and with
the optimal particle filter in the next section of the paper.

\begin{ass}
\label{ass:contraction}
Let $r = \sigma / \gamma$ and assume that there is $r_c>0$ such that, 
for all $r \in [0,r_c)$, the
function $(I-KH)\psi(\cdot)$, with $K$ defined through \eqref{e:CS}
and \eqref{eq:xtra}, 
is globally Lipschitz on $\CX$ with respect to the norm $\|\cdot\|$
with Lipschitz constant $\alpha=\alpha(r)<1.$
\end{ass}

\begin{thm}
\label{t:acc}
Suppose Assumptions \ref{ass:cond_gauss}, \ref{ass:contraction} 
hold for some $r_c > 0$. Then for all $r \in [0,r_c)$ we have
\[
\limsup_{k \to \infty} \bbE\|u_k-\bbE^{\mu_k}u_k\|^2 \le c\gamma^2\;,
\]
where $\bbE^{\mu_k}$ denotes expectation with respect to measure $\mu_k$ 
defined through \eqref{e:bpf_rec2}
and $\bbE$ denotes expectation over the dynamical model and the observational data.  
\end{thm}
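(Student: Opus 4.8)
The plan is to exploit the random dynamical system representation of the filter highlighted in \eqref{e:m_gain}: rather than trying to iterate on the posterior mean directly, I would track a trajectory drawn under the filtering distribution,
\begin{equ}
v_{k+1}=(I-KH)\psi(v_k)+K\yd_{k+1}+\zeta_{k+1},\qquad \zeta_{k+1}\sim N(0,C),
\end{equ}
which by \eqref{e:bayes_pk2}--\eqref{e:m_gain} represents a sample under $\mu_{k+1}$, with $C,K$ given by \eqref{e:CS} and \eqref{eq:xtra}, and couple it to the true signal $\ud_k$ through the \emph{shared} data $\{\yd_k\}$. This is precisely the $3$DVAR map of \cite{sanz2015long} perturbed by the conditional Gaussian innovation $\zeta_{k+1}$, so the accuracy argument there can be adapted. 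Unlike the coupling proof of Theorem \ref{thm:bayes_erg}, no minorization is needed here; the mechanism is a straight contraction of the error.

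First I would substitute the definitions in Assumption \ref{ass:dagger}, namely $\yd_{k+1}=H\ud_{k+1}+\gamma\etd_{k+1}$ and $\ud_{k+1}=\psi(\ud_k)+r\gamma\xid_k$, into the recursion for $e_k:=v_k-\ud_k$. A short computation gives
\begin{equ}
e_{k+1}=\bigl((I-KH)\psi(v_k)-(I-KH)\psi(\ud_k)\bigr)+w_{k+1},
\end{equ}
where $w_{k+1}=-(I-KH)r\gamma\xid_k+K\gamma\etd_{k+1}+\zeta_{k+1}$ collects the noise. The role of the completion of the square \eqref{e:bayes_pk2} is that the leading term is exactly the increment of the map $(I-KH)\psi(\cdot)$, to which Assumption \ref{ass:contraction} applies: its norm is at most $\alpha\|e_k\|$ with $\alpha=\alpha(r)<1$.

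Next I would pass to mean-square. Since $\xid_k$ enters $\ud_{k+1}$ but not $\ud_k$ or $v_k$, and $\etd_{k+1},\zeta_{k+1}$ are fresh draws, $w_{k+1}$ is mean-zero and independent of $(v_k,\ud_k)$, so the cross term vanishes and $\bbE\|e_{k+1}\|^2\le \alpha^2\,\bbE\|e_k\|^2+\bbE\|w_{k+1}\|^2$. It then remains to check $\bbE\|w_{k+1}\|^2=\mathcal{O}(\gamma^2)$: the first two terms are $\mathcal{O}(\gamma^2)$ by Assumption \ref{ass:dagger}, while $\bbE\|\zeta_{k+1}\|^2\lesssim\trace C$, and from \eqref{e:CS} with $\Sigma=r^2\gamma^2\Sigma_0$ and $\Gamma=\gamma^2\Gamma_0$ one has $C=\gamma^2(r^{-2}\Sigma_0^{-1}+H^*\Gamma_0^{-1}H)^{-1}$, giving $\trace C\le c\gamma^2$ uniformly on $[0,r_c)$ (norm equivalence lets me pass between $\|\cdot\|$ and the Euclidean norm, absorbing constants). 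Iterating the scalar recursion yields $\limsup_k\bbE\|e_k\|^2\le c\gamma^2/(1-\alpha^2)$.

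Finally, conditioning on the data $\{\yd_k\}$ the sample $v_k$ is independent of $\ud_k$, so $\bbE\|e_k\|^2=\bbE[\trace\,\mathrm{Cov}(\mu_k)]+\bbE\|\bbE^{\mu_k}u_k-\ud_k\|^2$; both summands are nonnegative, so the stated bound on $\bbE\|u_k-\bbE^{\mu_k}u_k\|^2$ follows. The main obstacle I anticipate is conceptual rather than computational: because $\psi$ is nonlinear, the posterior is a non-Gaussian mixture whose mean satisfies no closed recursion, so one cannot contract directly on $\bbE^{\mu_k}u_k$. The resolution is to run the contraction at the level of the sample path $v_k$, which \emph{does} obey the random dynamical system, and to use that the sample-to-truth error dominates both the posterior spread and the mean-to-truth error. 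A secondary point requiring care is the uniform $\mathcal{O}(\gamma^2)$ control of $\trace C$ across $r\in[0,r_c)$, including the degenerate limit $r\to0$ where $C\to0$ and the innovation disappears, reducing the filter trajectory exactly to the $3$DVAR estimate.
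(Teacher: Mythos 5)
Your error-contraction computation (the recursion for $e_k$, the vanishing cross terms, the ${\mathcal O}(\gamma^2)$ noise bounds --- your claim $\trace C\le \gamma^2 r^2\,\trace\Sigma_0$ is indeed correct and uniform on $[0,r_c)$ --- and the Gronwall step) is sound and is essentially the computation in the paper. The proof nevertheless has a genuine gap at the step where you interpret the trajectory: the claim that $v_{k+1}=(I-KH)\psi(v_k)+K\yd_{k+1}+\zeta_{k+1}$ ``represents a sample under $\mu_{k+1}$'' is false. If $v_k\sim\mu_k$ conditionally on the data, propagation through the optimal kernel $P(u_{k+1}\,|\,u_k,y_{k+1})$ produces a sample from $\int P(u_{k+1}\,|\,u_k,y_{k+1})\,\mu_k(du_k)$, whereas by \eqref{e:opf_bayes}
\[
P(u_{k+1}\,|\,Y_{k+1})=\int_{\CX} \frac{P(y_{k+1}\,|\,u_k)}{P(y_{k+1}\,|\,Y_k)}\,P(u_{k+1}\,|\,u_k,y_{k+1})\,P(u_k\,|\,Y_k)\,du_k\;.
\]
The likelihood factor $P(y_{k+1}\,|\,u_k)\propto\exp\bigl(-\frac12|y_{k+1}-H\psi(u_k)|_S^2\bigr)$ is exactly what your update omits, and its presence is the reason the OPF must weight and resample rather than merely propagate: your $v_k$ is precisely the $N=1$ optimal particle filter, and nothing forces a single particle to be distributed according to the posterior (the consistency bounds of Section \ref{s:consist} are ${\mathcal O}(N^{-1/2})$, vacuous at $N=1$). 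Consequently $\bbE[v_k\,|\,Y_k]\neq\bbE^{\mu_k}u_k$ and ${\rm Cov}(v_k\,|\,Y_k)\neq{\rm Cov}(\mu_k)$ in general, so the final decomposition $\bbE\|e_k\|^2=\bbE[\trace\,{\rm Cov}(\mu_k)]+\bbE\|\bbE^{\mu_k}u_k-\ud_k\|^2$, on which your conclusion rests, is unjustified.

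The gap is repairable, and the repair is the paper's own proof. You do not need $v_k$ to be a posterior sample; you only need an estimator built from the data (and possibly randomness independent of the signal), together with the $L^2$-optimality of the conditional mean. In the setting of the theorem the data are generated by the statistical model itself, so $\bbE^{\mu_k}u_k=\bbE[u_k\,|\,Y_k]$ and
\[
\bbE\|u_k-\bbE^{\mu_k}u_k\|^2\le\bbE\|u_k-m_k\|^2
\]
for every $Y_k$-measurable sequence $\{m_k\}$. The paper takes $m_k$ to be the noise-free 3DVAR iteration $m_{k+1}=(I-KH)\psi(m_k)+Ky_{k+1}$ --- your $v_k$ with $\zeta\equiv 0$ --- after which your contraction argument applies verbatim to $e_k=u_k-m_k$ and optimality gives the theorem. (Your noisy $v_k$ could also be used, but then you must prove the randomized-estimator version of optimality, via conditional independence and the law of total variance; the added noise $\zeta_{k+1}$ buys nothing and is what forces the uniform-in-$r$ control of $\trace C$ you worry about at the end.) One further caution: you set the argument under Assumption \ref{ass:dagger}, but optimality --- and any identification of $\bbE^{\mu_k}$ with a conditional expectation of the truth --- requires truth and data drawn from the model of Assumption \ref{ass:cond_gauss}, as the theorem's expectation indicates; with a fixed $\ud_0$ and covariances $\Sigma_*,\Gamma_*$ differing from $\Sigma,\Gamma$, the posterior $\mu_k$ is not the conditional law of $\ud_k$, and the argument breaks at the same final step.
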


\begin{proof}
This follows similarly to Corollary 4.3 in \cite{sanz}, using the fact
that the mean of the filtering distribution is optimal in the sense that
$$\bbE\|u_k-\bbE^{\mu_k}u_k\|^2 \le \bbE\|u_k-m_k\|^2$$
for any $Y_k-$measurable sequence $\{m_k\}.$
We use for $m_k$ the 3DVAR filter
$$m_{k+1}=(I-KH)\psi(m_k)+Ky_{k+1}.$$
Let $e_k=u_k-m_k$.
Following closely Theorem 4.10 of \cite{LSZ15} we obtain 
$$\bbE \|e_{k+1}\|_{k+1}^2 \le \alpha^2 \bbE \|e_k\|^2+{\mathcal O}(\gamma^2).$$
Application of the Gronwall lemma, plus use of the optimality property, 
gives the required bound.
\end{proof}

\subsection{Connection With 3DVAR}

In the previous subsection we used 3DVAR as a test function to upper
bound the error in the true filtering distribution. Here we further 
develop connections with 3DVAR with an eye on the formulation of
the optimal particle filter as a random dynamical system. 
Cosider the general filtering distribution.
Application of Bayes' formula in the form
\begin{equ}
\label{eq:dda}
q_{k+1} (u_k,du_{k+1})\propto P(y_{k+1} | u_{k+1}) P(u_{k+1} | u_k )du_{k+1}
\end{equ}
gives
\begin{equation*}
q_{k+1} (u_k , du_{k+1}) \propto \exp\left( -\frac{1}{2} | y_{k+1} - h(u_{k+1})  |_\Gamma^2 - \frac{1}{2} | u_{k+1} - \psi(u_k) |_\Sigma^2 \right)du_{k+1},
\end{equation*}
initialized at the measure $\mu_0$. 
Assumption \ref{ass:cond_gauss}, namely the linearity of the observation
operator, introduces a key simplification in this expression: we obtain 
\begin{equ}\label{e:bayes_pk}
q_{k+1} (u_k , du_{k+1}) \propto \exp\left( -\frac{1}{2} | y_{k+1} - H u_{k+1}  |_\Gamma^2 - \frac{1}{2} | u_{k+1} - \psi(u_k) |_\Sigma^2 \right)du_{k+1}
\end{equ}
and a simple completion of the square yields an alternative representation for the transition kernel, namely
\begin{equ}\label{e:bayes_pk2}
q_{k+1} (u_k,du_{k+1}) \propto \exp\left( -\frac{1}{2} |y_{k+1} - H\psi(u_k)|_S^2 -\frac{1}{2} |u_{k+1} - m_{k+1}|_C^2 \right) du_{k+1}\;,
\end{equ}
where 
\begin{equation}\label{e:CS}
\begin{aligned}
C^{-1} &= \Sigma^{-1} + H^* \Gamma^{-1} H \quad,\quad S = H \Sigma H^* + \Gamma \\
m_{k+1} &= C \left( \Sigma^{-1} \psi(u_k) + H^* \Gamma^{-1} y_{k+1}   \right)\;.
\end{aligned}
\end{equation}
The conditional mean $m_{k+1}$ is often given in \emph{Kalman filter} form
\begin{equ}\label{e:m_gain}
m_{k+1} = (I - KH)\psi(u_k) + Ky_{k+1}\;,
\end{equ} 
where $K$ is the \emph{Kalman gain} matrix 
\begin{equ}\label{eq:xtra}
K = \Sigma H^* S^{-1}.
\end{equ}
The expression \eqref{e:bayes_pk} arises from application of Bayes' formula, 
derived above in \eqref{e:bpf_bayes}--\eqref{eq:mightneed}, in the form
\eqref{eq:dda},
whilst \eqref{e:bayes_pk2} follows from a second application of
Bayes' formula to derive the identity
\[
P(y_{k+1} | u_{k+1}) P(u_{k+1} | u_k )du_{k+1} = P(y_{k+1} | u_{k}) P(u_{k+1} | u_k, y_{k+1})du_{k+1}.
\]
We note that
\begin{equation}\label{e:cond_gaus_Ps}
\begin{aligned}
P(y_{k+1} | u_{k}) = Z_{S}^{-1} \exp\left(-\frac{1}{2} |y_{k+1} - H \psi(u_k)|_S^2 \right) \\
P(u_{k+1} | u_k , y_{k+1}) = Z_{C}^{-1} \exp\left(-\frac{1}{2} |u_{k+1} - m_{k+1}|_C^2 \right)\;.
\end{aligned}
\end{equation} 
These formulae are prevalent in the data assimilation literature; in particular 
\eqref{e:m_gain} describes the evolution of the mean state estimate in the 
cycled 3DVAR algorithm, setting $u_{k+1} = m_{k+1}$  \cite{LSZ15}. 
We will make use of the formulae in section \ref{s:particles} 
when describing optimal particle filters as random dynamical systems.

\section{Particle Filters With Resampling}
\label{s:particles}\label{s:pf}
In this section we introduce the bootstrap particle filter, and the two
optimal particle filters, in all three cases with resampling at every step.  
Assumption \ref{ass:cond_gauss} 
ensures that the three particle filters have an elegant interpretation as a 
random dynamical system (RDS) which, in addition, is useful for
our analyses. We thus introduce the filters in this way before 
giving the algorithmic definition which is more commonly found in the 
literature. The bootstrap particle filter will not be the focus of subsequent 
theory, but does serve as an important motivation for the optimal particle 
filters, and in particular for the consistency results in 
Section \ref{s:consist}.
\par
For each of the three particle filters we will make frequent use of a resampling operator, which draws a sample $u_k^{(n)}$ from $\{\uhat_k^{(m)}\}_{m=1}^N$ 
with weights $\{w_k^{(m)}\}_{m=1}^N$ which sum to one. To define this operator, we define the intervals $I_{k}^{(m)} = [\alpha_k^{(m)},\alpha_k^{(m+1)})$ 
where 
\begin{align*}
\alpha_k^{(m+1)} &= \alpha_k^{(m)} + w_k^{(m+1)}, \quad m=0, \cdots N-1\\
\alpha_k^{(0)} &=0.
\end{align*} 
We then set
\begin{equ}\label{e:resampler}
u_k^{(n)} = \sum_{m=1}^N \unit_{I_{k}^{(m)}} (r_k^{(n)}) \uhat_k^{(m)}
\end{equ}
where $r_k^{(n)} \sim U(0,1)$ \iid. Since the weights sum to one, $r_k^{(n)}$ will lie in exactly one of the intervals $I_k^{(i_*)}$ and we will have $u_{k}^{(n)} = \uhat_k^{(i_*)}$. We also notice that
\[
\sum_{m=1}^N \frac{1}{N}\delta_{u_k^{(m)}}=
S^N\sum_{m=1}^N w_k^{(m)}\delta_{\uhat_k^{(m)}}
\]
where $S^N$ is the sampling operator defined previously.

\subsection{The Bootstrap Particle Filter}
The bootstrap particle filter (BPF) approximates the filtering distribution $\mu_k$ with an empirical measure
\begin{equ}\label{e:bpfe}
\bpfe_k^N = \sum_{n=1}^N \frac{1}{N}\delta_{u_k^{(n)}}\;.
\end{equ}
The particle positions $\{u_k^{(n)}\}_{n=1}^N$ are defined as follows. 
\begin{equation}\label{e:bpf_rds}
\begin{aligned}
\uhat_{k+1}^{(n)}&= \psi(u_k^{(n)})+\xi_k^{(n)}\quad,\quad\text{$\xi_k^{(n)} \sim N(0,\Sigma)$ \iid},\\
u_{k+1}^{(n)}&=\sum_{m=1}^{N} {\bbI}_{I_{k+1}^{(m)}}(r_{k+1}^{(n)})\uhat_{k+1}^{(m)}\;,
\end{aligned}
\end{equation}
where  the second equation uses the resampling operator defined 
in \eqref{e:resampler} with weights computed according to 
\begin{equ}\label{e:pf_weight_cd}
w_{k+1}^{(n),*} = \exp(-\frac{1}{2}|y_{k+1} - H\uhat_{k+1}^{(n)}|_\Gamma^2) \quad ,\quad w_{k+1}^{(n)} = \frac{w_{k+1}^{(n),*}}{\sum_{j=1}^N w_{k+1}^{(j),*}} \;.
\end{equ}
Thus, for each particle in the RDS, we propagate them forward using the dynamical model and then re-sample from the weighted particles to account for the observation likelihood.  
\par
Recall Bayes formula \eqref{e:bpf_bayes}.
The bootstrap particle filter approximates the posterior via a sequential application of importance sampling, using $P(u_{k+1}|Y_k) = \int P(u_{k+1} | u_k) P(u_k | Y_k) du_k$ as the proposal and re-weighting according to the likelihood $P(y_{k+1}|u_{k+1})$. Thus the method is typically described by the following algorithm for updating the particle positions. The particles are initialized with $u_0^{(n)}\sim\mu_0$ and then updated iteratively as follows: 
\begin{enumerate} 
\item Draw $\uhat_{k+1}^{(n)} \sim P(u_{k+1} | u_k^{(n)})$.
\item Define the weights $w_{k+1}^{(n)}$ for $n=1,\dots,N$ by
\begin{equ}
w_{k+1}^{(n),*} = P(y_{k+1} | \uhat_{k+1}^{(n)}) \quad ,\quad w_{k+1}^{(n)} = \frac{w_{k+1}^{(n),*}}{\sum_{m=1}^N w_{k+1}^{(m),*}} \;.
\end{equ}
\item  Draw $u_{k+1}^{(n)}$ from $\{\uhat_{k+1}^{(n)}\}_{n=1}^N$ with weights $\{w_{k+1}^{(n)}\}_{n=1}^N$. 
\end{enumerate}
Under Assumption \ref{ass:cond_gauss}, it is clear that the sampling and re-weighting procedures are consistent with \eqref{e:bpf_rds}. Note that the normalization factor $P(y_{k+1} | Y_k)$ is not required in the algorithm and is instead approximated via the normalization procedure in the second step. 
\par
In addition to $\bpfe_k^N$ it is also useful to define the related measure 
\begin{equ}\label{e:bpfw}
\bpfw^N_k = \sum_{n=1}^N w_k^{(n)} \delta_{\uhat_k^{(n)}}\;,
\end{equ}
 with $\bpfw_0^N = \mu_0$, which is related to the bootstrap particle filter by $\bpfe_k^N = S^N \bpfw_k^N$. As we shall see in Section \ref{s:consist}, the advantage of $\bpfw_k^N$ is that it has a recursive definition which allows for elegant proofs of consistency results \cite{vanhandel15}.  
  
\subsection{Optimal Particle Filter}

The optimal particle filter with resampling can also be formulated as a RDS. 
We once again approximate the filtering distribution $\mu_k$ with an empirical distribution
\begin{equ}\label{e:opfe}
\opfe_k^N = \sum_{n=1}^N \frac{1}{N}\delta_{u_k^{(n)}}\;.
\end{equ}
{Under  Assumption \ref{ass:cond_gauss} the particles in this approximation
are defined as follows.}
The particle positions are initialized with $u_0^{(n)} \sim \mu_0$.
Given a collection of particles $u_k^{(n)}$ the particles are evolved
according to the RDS update step 
\begin{equation}\label{e:opf_rds}
\begin{aligned}
\uhat_{k+1}^{(n)}&=(I-KH)\psi(u_k^{(n)})
+Ky_{k+1}+\zeta_k^{(n)}  \quad,\quad \text{$\zeta_k^{(n)} \sim N(0,C) $ \iid}\\
u_{k+1}^{(n)}&=\sum_{m=1}^{N} {\bbI}_{I_{k+1}^{(m)}}(r_{k+1}^{(n)})\uhat_{k+1}^{(m)}\;.
\end{aligned}
\end{equation}
Here $C,S,K$ are defined in \eqref{e:CS}, \eqref{e:m_gain} and as with the BPF, the second equation uses the resampling operator defined in \eqref{e:resampler} but now using weights computed by
\begin{equ}\label{e:opf_weight_cd}
w_{k+1}^{(n),*} = \exp(-\frac{1}{2}|y_{k+1} - H\psi(u_{k}^{(n)})|_S^2) \quad ,\quad w_{k+1}^{(n)} = \frac{w_{k+1}^{(n),*}}{\sum_{m=1}^N w_{k+1}^{(m),*}} \;.
\end{equ}
In light of the formulae given in \eqref{e:cond_gaus_Ps}, which are
derived under  Assumption \ref{ass:cond_gauss}, we see that the optimal particle filter is updating the particle positions by sampling from $P(u_{k+1} | u_k^{(n)},y_{k+1})$ and then re-sampling to account for the likelihood factor $P(y_{k+1} | u_k^{(n)})$. In particular, without necessarily making  Assumption \ref{ass:cond_gauss}, the optimal particle filter is a sequential importance sampling scheme applied to the following decomposition of the filtering distribution
\begin{equation}\label{e:opf_bayes}
\begin{aligned}
P(u_{k+1} | Y_{k+1}) &= \int_{\CX} P(u_{k+1} , u_k | Y_{k+1}  ) du_k \\
& = \int_{\CX} P(u_{k+1} | u_k , y_{k+1}) P(u_k | Y_{k+1}) du_k\\
& = \int_{\CX} \frac{P(y_{k+1} | u_k)}{P(y_{k+1}| Y_k)}P(u_{k+1} | u_k , y_{k+1}) P(u_k | Y_{k}) du_k\;.
\end{aligned}
\end{equation}
In the algorithmic setting, the filter is initialized with $u_0^{(n)} \sim \mu_0$, then for $k \geq 0$
\begin{enumerate}
\item Draw $\uhat_{k+1}^{(n)}$ from $P(u_{k+1}| u_k^{(n)}, y_{k+1})$
\item Define the weights $w_{k+1}^{(n)}$ for $n=1,\dots,N$ by
\begin{equ}
w_{k+1}^{(n),*} = P(y_{k+1} | \uhat_{k}^{(n)}) \quad ,\quad w_{k+1}^{(n)} = \frac{w_{k+1}^{(n),*}}{\sum_{m=1}^N w_{k+1}^{(m),*}} \;.
\end{equ}
\item Draw $u_{k+1}^{(n)}$ from $\{\uhat_{k+1}^{(m)}\}_{m=1}^N$ with weights $\{w_{k+1}^{(m)}\}_{m=1}^N$.
\end{enumerate}
It is important to note that, although the OPF is well defined in this general setting for any choice of dynamics-obsevation model, 
it is only implementable under stringent assumptions on the
forward and observation model, such as those given in
Assumption \ref{ass:cond_gauss};
under this assumption the steps 1 and 2 may be implemented using 
the formulae given in \eqref{e:cond_gaus_Ps} and exploited in the
derivation of \eqref{e:opf_rds}. We emphasize that
models satisfying Assumption \ref{ass:cond_gauss} do arise frequently in
practice.
\par
As with the BPF, it is beneficial to consider the related particle filter given by 
\begin{equ}\label{e:opfw}
\opfw_k^N = \sum_{n=1}^N w_{k}^{(n)} \delta_{\uhat_k^{(n)}}
\end{equ}
for $k \geq 1$ and with $\opfw_0^N = \mu_0$. Similarly to the bootstrap
particle filter  we have that $\opfe_k^N = S^N\opfw_k^N$. 

\subsection{Gaussianized Optimal Particle Filter}

In \cite{kelly16b}, an alternative implementation of the OPF is investigated and found to have superior performance on a range of test problems, particularly with respect to the curse of dimensionality. We refer to this filter as the Gaussianized optimal particle filter (GOPF), but note that it was first derived
in \cite{pitt1999filtering}.
Once again, we approximate the filtering distribution with an empirical measure
\begin{equ}\label{e:gopf}
\gopf_k^N = \sum_{n=1}^N \frac{1}{N}\delta_{v_k^{(n)}}\;.
\end{equ}
As in the previous subsection, we first describe the filter under
Assumption \ref{ass:cond_gauss}.
The filter is initialized with $v_0^{(n)} \sim \mu_0$, with subsequent
iterates generated by the RDS
\begin{equation}\label{e:gopf_rds1}
\begin{aligned}
\vtilde_{k}^{(n)}&=\sum_{m=1}^{N} {\bbI}_{I_{k+1}^{(m)}}(r_{k+1}^{(n)})v_{k}^{(m)}\;,\\
v_{k+1}^{(n)}&=(I-KH)\psi(\vtilde_k^{(n)})
+Ky_{k+1}+\zeta_k^{(n)} ,\quad \text{$\zeta_k^{(n)} \sim N(0,C) $ \iid}
\end{aligned}
\end{equation}
and the weights appearing in the resampling operator are given by
\begin{equ}\label{e:gopf_weight_cd}
w_{k+1}^{(n),*} = \exp(-\frac{1}{2}|y_{k+1} - H\psi(v_{k}^{(n)})|_S^2),\quad w_{k+1}^{(n)} = \frac{w_{k+1}^{(n),*}}{\sum_{m=1}^N w_{k+1}^{(m),*}} \;.
\end{equ}
Thus, the update procedure for GOPF is \emph{weight-resample-propagate}, as opposed to \emph{propagate-weight-resample} for the OPF. Hence the only difference between the OPF and GOPF is the ordering of the the resampling and propagation
steps. 
\par
In our analysis it is sometimes useful to consider the equivalent RDS
\begin{equation}
\begin{aligned}
\label{e:gopf_rds2}
\vhat_{k+1}^{(m,n)}&=(I-KH)\psi(v_k^{(m)})
+Ky_{k+1}+\zeta_k^{(m,n)} \quad \text{$\zeta_k^{(m,n)} \sim N(0,C)$ \iid}\\
v_{k+1}^{(n)}&=\sum_{m=1}^{N} {\bbI}_{I_{k+1}^{(m)}}(r_{k+1}^{(n)})\vhat_{k+1}^{(m,n)}\;.
\end{aligned}
\end{equation} 
The sequences $v_k^{(n)}$ defined in \eqref{e:gopf_rds1} and \eqref{e:gopf_rds2} agree because for every $n$ there is exactly one $m=m^*(n)$ such that $\vhat_{k+1}^{(m^*(n),n)}$
survives the resampling step.
Writing the algorithm this way allows certain parts of our subsequent
analysis to be performed very similarly for both the OPF and GOPF;
it is not a formulation to be implemented in practice.
\par
For a general dynamics-observation model, the GOPF is described by the 
following algorithm:
\begin{enumerate}
\item Define the weights $w_{k+1}^{(n)}$ for $n=1,\dots,N$ by
\begin{equ}
w_{k+1}^{(n),*} = P(y_{k+1} | v_{k}^{(n)}) \quad ,\quad w_{k+1}^{(n)} = \frac{w_{k+1}^{(n),*}}{\sum_{m=1}^N w_{k+1}^{(m),*}} \;.
\end{equ}
\item Draw $\vtilde_{k}^{(n)}$ from $\{v_{k}^{(m)}\}_{m=1}^N$ with weights $\{w_{k+1}^{(m)}\}_{m=1}^N$.
\item Draw $v_{k+1}^{(n)}$ from $P(u_{k+1}| \vtilde_k^{(n)}, y_{k+1})$\;.
\end{enumerate}   
Unlike for the previous filters, there is no need to define an associated `hatted' measure, as the GOPF can be shown to satisfy a very natural recursion. This will be discussed in Section \ref{s:consist}. 

\section{Ergodicity for Optimal Particle Filters}\label{s:ergo}

In this section we study the conditional ergodicity of the two optimal
particle filters. The proofs are structurally very similar to 
one another and so we give details only in one case.
The ergodicity results require a metric on probability measures to quantify 
convergence of differently initialized posteriors in the long time limit. 
To this end, we define the total variation metric on $\CM(\CX)$ by
\begin{equ}
d_{TV} (\mu,\nu) = \frac12\sup_{|h|\leq 1} |\mu(h) - \nu(h)|
\end{equ}
where the supremum is taken over all bounded functions $h : \CX \to \reals$ with $|h| \leq 1$, and where we define $\mu(h) := \int_{\CX} h(x) \mu(dx)$ 
for any probability measure $\mu \in \CM(\CX)$ and any 
real-valued test function $h$ bounded by $1$ on $\CX$.
This definition is then readily extended to probability
measures on $\CX^N.$

\subsection{Optimal Particle Filter}

Before stating the conditional ergodicity result, we first need some notation. Define $u_k = (u_k^{(1)},\dots,u_k^{(N)})$ to be particle positions defined by the RDS \eqref{e:opf_rds} with $\mu_0 = \delta_{z_0}$ and similarly $u_k' = 
(u_k^{(1)'},\dots, u_k^{(N)'})$ with $\mu_0 = \delta_{z_0'}$. Then $u_k$ is a Markov chain taking values on $\CX^N$, whose inhomoegeous Markov kernel we denote by $q_k(z,\cdot)$. The law of $u_k$ is given by $q^k(z_0,\cdot)$, defined recursively 
by composing the $q_k$; and similarly the law of $u_k'$ is given by $q^k(z_0',\cdot)$. The conditional ergodicity result states that if the two filters $u_k, u_k'$ are driven by the same observational data, then the law of $u_k$ will converge to the law of $u_k'$ exponentially as $k\to\infty$.

\begin{rmk}We abuse notation in this subsection 
by using $u_k \in \CX^N$ to denote the $N$
particles comprising the optimal particle filter; this differs from
the notation $u_k \in \CX$ used in the remainder of the paper to denote
the underlying dynamical model. Similarly $q_k$ is here a one-step
linear Markov kernel on $\CX^N$ whereas, previously, it denoted
a one-step nonlinear Markov kernel on $\CX.$ 
We note also the important distinction between
$q_k$ and $q^k$: in the former case $q_k$ is a one-step transition kernel,
inhomogeneous and depending on $k$; in the latter case $q^k$ is kernel
found by composing over $k$ steps. $\qed$
\end{rmk}

\begin{thm}
\label{thm:opf_erg}
Suppose that  Assumptions \ref{ass:cond_gauss} hold.
Consider the OPF particles $u_k, u_k'$ defined above. Assume moreover that the observational data used to define each filter is the same,
and given by $\{\yd_k\}_{k\geq 1}$ from Assumption \ref{ass:dagger}. Then there exists $\mfz_N \in (0,1)$ such that, almost surely with respect
to the randomness generating $\{\yd_k\}_{k\geq 1}$,
\begin{equ}\label{e:opf_erg}
\limsup_{k \to \infty} \Bigr(\dtv\bigl(q^k(z_0,\cdot),q^k(z_0',\cdot)\bigr)\Bigr)^{1/k} \le \mfz_N \;.
\end{equ}
\end{thm}

\begin{proof}

\emph{Step A}: 
Notice that
\begin{equation}
\label{eq:markovin2}
q^{k+1}(z_0,\cdot)=\int_{\CX} q_{k+1}(u_k,\cdot)q^{k}(z_0,du_k)
\end{equation}
where the transition kernel $q_{k+1}(u_k,\cdot)$ is 
here viewed as being a measure. 


\par
The heart of the argument is Step B, below, in which we 
prove a minorization condition for the transition kernel $q_{k+1}$,
as we did for the filtering distribution itself in subsection \ref{ssec:CE}.
That is, we seek a measure $\bbQ \in \CM(\CX^N)$ and a sequence of constants $\epsilon_k > 0$ satisfying 
\begin{equ}\label{e:bayes_minor_z}
q_{k+1}(u,A) \geq \epsilon_k \bbQ(A)
\end{equ}   
for all $u \in \CX^N$ and all measurable sets $A \subset \CX^N$. 
Given a minorization condition, we obtain the result via the following 
standard coupling argument. The minorization condition allows us to define a new Markov kernel 
\begin{equ}
\tp_{k+1}(x,A)=(1-\epsilon_k)^{-1}\Bigl(q_{k+1}(x,A)-\epsilon_k \bbQ(A)\Bigr)\;.
\end{equ}
The Markov chain in which transitions occur with probability $1-\epsilon_k$
according to $\tp_{k+1}(x,\cdot)$ and with probability $\epsilon_k$
according to $\bbQ(\cdot)$ is equivalent in law to the Markov chain
$\{u_k\}$, and similarly for $\{u_k'\}$. Furthermore we
may couple the two Markov chains by using the same random variables
to select whether moves are made according to $\tp_{k+1}(x,\cdot)$ or $\bbQ(\cdot)$. 
We now complete the coupling argument, assuming the minorization condition
holds.
We compare expectations of the two Markov chains, using the equivalent
in law formulation above, and coupled through the random moves according
to $\bbQ(\cdot)$ which occur at each step with probability $\eps_k.$
Let $A_k$ be the event that the state
independent Markov kernel $\bbQ(\cdot)$ is not picked at all times 
$j=0, \cdots, k-1$. Then we have 
\begin{align*}
\dtv\bigl(q^k(z_0,\cdot),q^k(z_0',\cdot)\bigr)&=\frac12 \sup_{|f|_{\infty} \le 1}|\bbE\bigl(f(u_k)-f(u_k')\bigr)|\\
&=\frac12 \sup_{|f|_{\infty} \le 1}\Bigl|\bbE\Bigl(\bigl(f(u_k)-f(u_k')\bigr)\bbI_{A_k} +\bigl(f(u_k)-f(u_k')\bigr)\bbI_{A_k^c}\Bigr)\Bigr|\;.
\end{align*}
Note that for this coupling the second term vanishes, as in the event $A_k^c$, the two chains Markov kernels $q^k(z_0,\cdot)$ and $q^k(z_0',\cdot)$ will have become
identical to measure $\bbQ$ at, or before, step $k$. Once that happens, they remain identical for all future steps. It follows that 
\begin{align*}
\dtv\bigl(q^k(z_0,\cdot),q^k(z_0',\cdot) \bigr) & \le \bbE (\bbI_{A_k})
=\bbP(A_k)
=\Pi_{j=1}^k(1-\epsilon_j)\;.
\end{align*}
To obtain the result \eqref{e:opf_erg}, we need to understand the limiting behaviour of the constants $\epsilon_j$ appearing in the minorization condition \eqref{e:bayes_minor_z}. Hence we turn our attention toward obtaining the minorization condition. 

\emph{Step B}:
Before deriving the minorization, 
we introduce some preliminaries. Using the fact that 
\[
\yd_{k+1}=H\psi(\ud_{k})+\gamma(rH\xid_k+ \etd_{k+1})
\] 
 and defining
\begin{align*}
a_k &= \left( (I-KH)\psi(u_k^{(n)}) + K H \psi(\ud_k) \right)^N_{n=1}\quad \zeta_k=\left(\zeta_k^{(n)}\right)_{n=1}^N \\ \rd_{k,0} &=\gamma (rH\xid_k+ \etd_{k+1}),
\quad \rd_k=\left( \gamma K(rH\xid_k+ \etd_{k+1}) \right)^N_{n=1}
\end{align*}
we see that
\[
\uhat_{k+1}=a_k+\rd_k+\zeta_k\;.
\]
The next element of the sequence, $u_{k+1}$, is then defined by the
second identity in \eqref{e:opf_rds}.
We are interested in the conditional ergodicity of $\{u_k\}_{k=1}^{\infty}$
with the sequence $\{\rd_k\}_{k=1}^{\infty}$ fixed. By Assumption \ref{ass:cond_gauss}, $a_k$ is bounded uniformly in $k$. We define the covariance operator $\mfC \in L(\CX^N,\CX^N)$ to be a block diagonal covariance with each diagonal entry equal
to $C$ and then 
\[
R=\sup_{(u,v)} \Bigl(|(I-KH)\psi(u)+KH \psi(v)|_C^2\Bigr)\;,
\]
which is finite by Assumption \ref{ass:cond_gauss}. 
\par
Now, let $E_0$ be the event that, upon resampling, every particle survives
the resampling. There are $N!$ such permutations. We will do the calculation in the case of a trivial permutation, that is, where each particle is mapped to itself under the resampling.
However the bounds which follow work for any permutation because
we do not use any information about location of the mean of the
particle proposals; we simply
use bounds on the drift $\psi.$ If each particle is mapped to itself, 
then $u^{(n)}_{k+1} = \uhat^{(n)}_{k+1}$ for all $n=1,\dots,N$. It follows
that
\begin{equs}
q_{k+1} (u, A) &= \bbP(u_{k+1} \in A | u_k = u)\\ &\geq \bbP(u_{k+1} \in A | u_k = u , E_0) \bbP(E_0)\\ &= \bbP(\uhat_{k+1} \in A | u_k = u ) \bbP(E_0)\;.
\end{equs}

We now note that
\begin{align*}
\bbP (\uhat_{k+1} \in A | u_k = u )  &=\frac{1}{\sqrt{(2\pi)^{dN} \det \mfC}}\int_A \exp
\Bigl(-\frac12|x-a_k-\rd_k|_{\mfC}^2\Bigr) dx\\
& \ge \frac{\exp\Bigl(-|a_k+\rd_k|^2_{\mfC}\Bigr)}{\sqrt{(2\pi)^{dN} \det \mfC}}
\int_A \exp \Bigl(-|x|_{\mfC}^2\Bigr) dx\\
& \ge 2^{-dN/2}\exp(-2|a_k|_{\mfC}^2)\exp(-2|\rd_k|_{\mfC}^2) \bbQ_{\mfC}(A)\\
& \ge 2^{-dN/2}\exp(-2NR^2)\exp(-2|\rd_k|_{\mfC}^2) \bbQ_{\mfC}(A)
\end{align*}
where $\bbQ_{\mfC}(A)$ is the Gaussian measure $N(0,\frac12\mfC).$
Thus we have shown that
\begin{equation}
\label{eq:co}
\bbP(\uhat_{k+1} \in A | u_k = u ) \ge \delta_k\bbQ_{\mfC}(A)
\end{equation}
where
\[
\delta_k=2^{-dN/2}\exp(-2NR^2)\exp(-2|\rd_k|_{\mfC}^2)\;.
\]
Moreover, we have that 
\[
\bbP(  E_0 ) = N! \Pi_{n=1}^N w_{k+1}^{(n)}.
\]
Note that we have the bound 
$w_{k+1}^{(n)} \geq w_{k+1}^{(n),*}/N$ for each $n=1,\dots,N$ because
each $w_{k+1}^{(m),*}$ is bounded by $1$. But we have
\begin{align*}
w_{k+1}^{(n),*}&=\exp\Bigl(-\frac12|y_{k+1}-H\psi(u_k^{(n)})|_{S}^2\Bigr)\\
&=\exp\Bigl(-\frac12|H\psi(\ud_k)-H\psi(u_k^{(n)})+\rd_{k,0}|_{S}^2\Bigr)\\
& \ge \exp\Bigl(-r^2-|\rd_{k,0}|_{S}^2\Bigr)
\end{align*}
where
\[
r^2= \sup_{u,v}|H\psi(u)-H\psi(v)|_S^2
\]
which is finite by Assumption \ref{ass:cond_gauss}. 
From this we see that 
\[
\bbP(E_0) \ge N! \frac{1}{N^N} \exp\bigl(-Nr^2-N|\rd_{k,0}|_{S}^2\bigr).
\]
Thus we obtain the minorization conditon \eqref{e:bayes_minor_z} where 
\[
\epsilon_k= N! \frac{1}{N^N} \exp\bigl(-Nr^2-N|\rd_{k,0}|_{S}^2\bigr)\delta_k\quad,\quad \bbQ = \bbQ_\mfC\;.
\]

\emph{Step C}: By the argument in Step A we have that
\begin{equ}
\dtv\bigl(q^k(z_0,\cdot),q^k(z_0',\cdot)\bigr)^{1/k} \leq \mfz_k
\end{equ}
where $\mfz_k=\Bigl(\Pi_{j=1}^k(1-\epsilon_j)\Bigr)^{1/k}$. Since the $\epsilon_k$ are \iid and integrable, by the law of large numbers, almost surely with respect to the randomness generating the true signal and the data, we have
\begin{equ}
\ln \mfz_k = \frac{1}{k}\sum_{j=1}^k \ln(1-\epsilon_j) \to \bbE  \ln(1-\epsilon_1)=-\bbE \sum_{n=1}^\infty
\frac{1}{n}\epsilon_1^n\;.
\end{equ}
But $\epsilon_1\le \exp(-2|\rd_{1,0}|_\Gamma^2).$ 
Since $\rd_{1,0}$ is Gaussian it follows that the $n^{th}$ moment 
of $\epsilon_1$ is bounded above by ${\mathcal O}(n^{-\frac12})$ so that
the limit of $\ln \mfz_k$ is negative and finite; the result follows.

\end{proof}

\subsection{Gaussianized Optimal Filter}
As in the last section, we define $v_k = (v_k^{(1)},\dots,v_k^{(N)})$ and similarly for $v_k'$ using the RDS but now for the GOPF \eqref{e:gopf_rds1} (or alternatively \eqref{e:gopf_rds2}) with distinct initializations $\mu_0 = \delta_{z_0}$ and $\mu_0 = \delta_{z_0'}$. Similarly to Theorem \ref{thm:opf_erg}, 
we let $q^{k}(z_0,\cdot)$ denote the law of $v_k$. 

\begin{thm}
\label{thm:gopf_erg}
Suppose that  Assumptions \ref{ass:cond_gauss} hold.
Consider the GOPF particles $v_k, v_k'$ defined above. Assume moreover that the observational data used to define each filter is the same,
and given by $\{\yd_k\}_{k\geq 1}$ from Assumption \ref{ass:dagger}. Then there exists $\mfz_N \in (0,1)$ such that, almost surely with respect
to the randomness generating $\{\yd_k\}_{k\geq 1}$,
\begin{equ}\label{e:gopf_erg}
\limsup_{k \to \infty} \Bigr(\dtv\bigl(q^k(z_0,\cdot),q^k(z_0',\cdot)\bigr)\Bigr)^{1/k} \le \mfz_N \;.
\end{equ}
\end{thm}

\begin{proof}
The proof follows similarly to that of Theorem \ref{thm:opf_erg}, in particular it suffices to obtain a minorization condition for $q_{k+1}(v,\cdot)$. We will use the RDS representation \eqref{e:gopf_rds2}, which we now recall 
\begin{equation}
\begin{aligned}
\vhat_{k+1}^{(m,n)}&=(I-KH)\psi(v_k^{(m)})
+Ky_{k+1}+\zeta_k^{(m,n)} \quad \text{$\zeta_k^{(m,n)} \sim N(0,C)$ \iid}\\
v_{k+1}^{(n)}&=\sum_{m=1}^{N} {\bbI}_{I_{k+1}^{(m)}}(r_{k+1}^{(n)})\vhat_{k+1}^{(m,n)}\;.
\end{aligned}
\end{equation} 
In this formulation, note that for each $n$ there is one and only one $m = m^*(n)$ such that 
${\bbI}_{I_{k}^{(m)}}(r_{k}^{(n)})=1$. We see that 
\[
v_k :=  (v_k^{(n)})_{n=1}^N=(\vhat_{k}^{(m^*(n),n)})_{n=1}^N\;.
\]
Using the fact that 
\[
\yd_{k+1}=H\psi(\ud_{k})+\gamma(rH\xid_k+ \etd_{k+1})
\]
and defining
\begin{equs}
a_k &= \left( (I-KH)\psi(v_k^{(m^*(n))}) + K H \psi(\ud_k) \right)^N_{n=1}\quad \zeta_k=\left(\zeta_k^{(m^*(n),n)}\right)_{n=1}^N  \\ \rd_{k,0}&=\gamma (rH\xid_k+ \etd_{k+1}),
\quad \rd_k=\left( \gamma K(rH\xid_k+ \etd_{k+1}) \right)^N_{n=1}
\end{equs}
we see that
\[
v_{k+1}=a_k+\rd_{k}+\zeta_{k}\;.
\]
Now notice that
\begin{equs}
q_{k+1}(v,A) = \bbP (v_{k+1} \in A | v_k = v) &= \bbP\left( (\vhat_{k+1}^{(m^*(n),n)})_{n=1}^N \in A | v_k = v \right) \\
& = \frac{1}{\sqrt{(2\pi)^{dN} \det \mfC}}\int_A \exp
\Bigl(-\frac12|x-a_{k}-\rd_{k}|_{\mfC}^2\Bigr) dx\\
& \geq \frac{\exp\Bigl(-|a_k+\rd_k|^2_{\mfC}\Bigr)}{\sqrt{(2\pi)^d \det \mfC}}
\int_A \exp \Bigl(-|x|_{\mfC}^2\Bigr) dx\\
& \geq 2^{-dN/2}\exp(-2|a_k|_{\mfC}^2)\exp(-2|\rd_k|_{\mfC}^2) \bbQ_{\mfC}(A)\\
& \geq 2^{-dN/2}\exp(-2NR^2)\exp(-2|\rd_k|_{\mfC}^2) \bbQ_{\mfC}(A)
\end{equs}
where $\bbQ_{\mfC}$ is the Gaussian measure $N(0,\frac12\mfC).$
Thus we have shown that
\begin{equation}
\label{eq:Gco}
q_{k+1}(v,A) \geq \delta_k \bbQ_{\mfC}(A)
\end{equation}
where
\[
\delta_k=2^{-dN/2}\exp(-2NR^2)\exp(-2|\rd_k|_{\mfC}^2)\;.
\]
The remainder of the proof (step C) follows identically 
to Theorem \ref{thm:opf_erg}. 
\end{proof}

\begin{rmk}
We can compare our (upper bounds on the) rates of convergence for the 
two optimal filters, using the minorization constants. 
For the OPF we have
\[
\epsilon_1 = N! \frac{1}{N^N} \exp\bigl(-Nr^2-N|\rd_{1,0}|_{S}^2\bigr)\delta_1\;,
\]
where 
\[
\delta_1=2^{-dN/2}\exp(-2NR^2)\exp(-2|\rd_1|_{\mfC}^2)\;;
\]
for the GOPF we simply have $\epsilon_1 = \delta_1$. The extra $N$ dependence in the OPF clearly leads to a slower (upper bound on the) rate of convergence for the OPF. Thus, 
by this simple argument, we obtain a better convergence rate for the GOPF than for the 
OPF. This suggests that the GOPF may have a better rate of convergence for fixed 
ensemble sizes; further analysis or experimental  study of this point would be of interest. 
$\qed$
\end{rmk}

\section{Accuracy for Optimal Particle Filters}\label{s:acc}

In this section we study the accuracy of the optimal particle filters, in the
small noise limit $\gamma \to 0$. The expectation appearing in the theorem statements
is with respect to the noise generating the data, and with respect to the randomness
within the particle filter itself. Note that this situation differs from that
in the accuracy result for the filter itself which uses data generated by the
statistical model. Assumption \ref{ass:dagger} relaxes this assumption.

\subsection{Optimal Particle Filter}


\begin{thm}
\label{thm:opf_acc}
Suppose that  Assumptions \ref{ass:cond_gauss}, \ref{ass:contraction} hold
and consider the OPF with particles $\{u_k^{(n)}\}_{n=1}^N$ defined by \eqref{e:opf_rds} with data $\{\yd_k\}$
given by Assumption \ref{ass:dagger}.
It follows that there is constant $c$ such that
$$\limsup_{k \to \infty} \bbE \Bigl(\max_n\|u_{k}^{(n)}-\ud_k\|^2\Bigr) \le c\gamma^2.$$
\end{thm}

\begin{proof}
First recall the notation $\Sigma = \sigma \Sigma_0$, $\Gamma = \gamma \Gamma_0$ and $r = \sigma / \gamma$. Now define
\begin{align*}
S_0&=r^2 H\Sigma_0 H^*+\Gamma_0,\\
C_0&=r^2(I-KH)\Sigma_0
\end{align*}
and note that
\[
S=\gamma^2 S_0, C=\gamma^2 C_0, K=r^2 \Sigma_0 H^* S_0^{-1}\;.
\]
We will use the RDS representation
\begin{equation}\label{e:opf_rds_acc}
\begin{aligned}
\uhat_{k+1}^{(n)}&=(I-KH)\psi(u_k^{(n)})
+K\yd_{k+1}+\gamma \zeta_{0,k}^{(n)}  \quad,\quad \text{$\zeta_k^{(n)} \sim N(0,C) $ \iid}\\
u_{k+1}^{(n)}&=\sum_{m=1}^{N} {\bbI}_{I_{k+1}^{(m)}}(r_{k+1}^{(n)})\uhat_{k+1}^{(n)}\;.
\end{aligned}
\end{equation}
where $\zeta_{0,k}^{(n)} \sim N(0,C_0)$ \iid. Hence we have
\begin{equs}
\ud_{k+1}&=(I-KH)\psi(\ud_{k})+KH\psi(\ud_{k})+r\gamma\xid_k,\\
\uhat_{k+1}^{(n)}&=(I-KH)\psi(u_{k}^{(n)})+K(H\psi(\ud_{k})+\gamma \etd_{k+1})+\gamma\ztd_{0,k}.
\end{equs}
where $\ztd_{0,k} \sim N(0,C_0)$ i.i.d. 
Subtracting, 
we obtain
\begin{equation}
\label{eq:ee}
\uhat_{k+1}^{(n)}-\ud_{k+1}=(I-KH)\bigl(\psi(u_{k}^{(n)})-\psi(\ud_{k})\bigr)+
\gamma \iota_k^{(n)}
\end{equation}
where $\iota_k^{(n)}:= (K\etd_{k+1}+\ztd_{0,k}-r\xid_k)$. Moreover we have the identity
\begin{equation}
\label{eq:np2}
\ud_{k+1}=\sum_{m=1}^{N} {\bbI}_{I_{k+1}^{(m)}}(r_{k+1}^{(n)})\ud_{k+1}\;.
\end{equation} 
Thus, defining
\[
e_k^{(n)}=u_k^{(n)}-\ud_k, \quad \he_k^{(n)}=\uhat_k^{(n)}-\ud_k
\]
we have from \eqref{e:opf_rds_acc} and \eqref{eq:np2}
\[
e_{k+1}^{(n)}=\sum_{m=1}^{N} {\bbI}_{I_{k+1}^{(m)}}(r_{k+1}^{(n)})\he_{k+1}^{(m)}\;.
\]
Thus
\[
\max_n \|e_{k+1}^{(n)}\|^2 \le \max_m \|\he_{k+1}^{(m)}\|^2\;,
\]
where the norm is the one in which we have a contraction. Using \eqref{eq:ee}, the Lipschitz property of $(I-KH)\psi(\cdot)$, taking expectations and using independence, yields
\[
\bbE\Bigl(\max_n\|u_{k+1}^{(n)}-\ud_{k+1}\|^2\Bigr) \le \alpha^2 \bbE
\Bigl(\max_n\|u_{k}^{(n)}-\ud_{k}\|^2\Bigr)
+{\cal O}(\gamma^2)
\]
and the result follows by Gronwall.
\end{proof}
\subsection{Gaussianized Optimal Filter}

\begin{thm}
\label{thm:gopf_acc}
Let Assumptions \ref{ass:contraction}, \ref{ass:contraction} hold and consider the GOPF with particles $\{v_k^{(n)}\}_{n=1}^N$ defined by \eqref{e:gopf_rds1} (or \eqref{e:gopf_rds2})  with data $\{\yd_k\}$
given by Assumption \ref{ass:dagger}.
It follows that there is constant $c$ such that
$$\limsup_{k \to \infty} \bbE \Bigl(\max_n\|v_{k}^{(n)}-\ud_k\|^2\Bigr) \le c\gamma^2.$$
\end{thm}

\begin{proof}
Recall the notation defined at the beginning of the proof of Theorem \ref{thm:opf_acc}. Recall also the RDS representation of the GOPF \eqref{e:gopf_rds2}
\begin{equation}\label{e:gopf_rds_acc}
\begin{aligned}
\vhat_{k+1}^{(m,n)}&=(I-KH)\psi(v_k^{(m)})
+Ky_{k+1}+\gamma \zeta_{0,k}^{(m,n)}\\
v_{k+1}^{(n)}&=\sum_{m=1}^{N} {\bbI}_{I_{k+1}^{(m)}}(r_{k+1}^{(n)})\vhat_{k+1}^{(m,n)}\;.
\end{aligned}
\end{equation} 
where we now have $\zeta_{0,k}^{(m,n)} \sim N(0,C_0)$ \iid, recalling that $C = \gamma^2C_0$.
We also have the identity
\begin{align}
\ud_{k+1}&=(I-KH)\psi(\ud_{k})+KH\psi(\ud_{k})+r\gamma\xid_k\;,
\end{align}
where $\ztd_{0,k} \sim N(0,C_0)$ i.i.d. Subtracting, we obtain
\begin{equation}
\label{eq:Gee}
\vhat_{k+1}^{(m,n)}-\ud_{k+1}=(I-KH)\bigl(\psi(v_{k}^{(m)})-\psi(\ud_{k})\bigr)+
\gamma \iota_k^{(n)}
\end{equation}
where $\iota_k^{(n)}:= (K\etd_{k+1}+\ztd_{0,k}-r\xid_k).$
Note that
\begin{equation}
\label{eq:Gnp2}
\ud_{k+1}=\sum_{m=1}^{N} {\bbI}_{I_{k+1}^{(m)}}(r_{k+1}^{(n)})\ud_{k+1}
\end{equation} 
so that, defining
\[
e_k^{(n)}=v_k^{(n)}-\ud_k, \quad \he_k^{(m,n)}=\vhat_k^{(m,n)}-\ud_k
\]
we have from \eqref{e:gopf_rds_acc}, \eqref{eq:Gee} and \eqref{eq:Gnp2}
\[
e_{k+1}^{(n)}=\sum_{m=1}^{N} {\bbI}_{I_{k+1}^{(m)}}(r_{k+1}^{(n)})\he_{k+1}^{(m,n)}
.\]
Thus
\[
\max_n \|e_{k+1}^{(n)}\|^2 \le \max_m \|\he_{k+1}^{(m,n)}\|^2\;,
\]
where the norm is the one in which we have a contraction.
Using \eqref{eq:Gee}, using the Lipschitz property of $(I-KH)\psi(\cdot)$, 
taking expectations and using independence, gives
\[
\bbE\Bigl(\max_n\|v_{k+1}^{(n)}-\ud_{k+1}\|^2\Bigr) \le \alpha^2 \bbE
\Bigl(\max_n\|v_{k}^{(n)}-\ud_{k}\|^2\Bigr)
+{\cal O}(\gamma^2)\;.
\]
The result follows by Gronwall.
\end{proof}

\section{Consistency in the Large Particle Limit}\label{s:consist}

In this section we state and prove consistency results for the BPF, OPF and 
GOPF introduced in Section \ref{s:particles}, in a simple unified framework.
For the BPF the result is well known but we reproduce it here as the prove
serves as an ideological template for the more complicated proofs to follow; furthermore we present the clean proof given in 
\cite{vanhandel15} (see also \cite[Chapter 4]{LSZ15}) as this particular
approach to the result generalizes naturally to the OPF and GOPF.
We also note that the more general analysis of the consistency of the
auxiliary particle filter in \cite{johansen2008note} implies the result that
we prove here about the GOPF.

\subsection{Bootstrap Particle Filter}

In the following, we let $f_{k+1} : \CX \to\reals $ be any function with $f_{k+1}(u_{k+1}) \propto P(y_{k+1} | u_{k+1})$; any proportionality constant will suffice, but
the normalization constant is of course natural. As in previous sections, 
we let $\mu_k$ denote the filtering distribution. The following theorem is
stated and then proved through a sequence of lemmas in the remainder of
the subsection.
 
\begin{thm}\label{thm:bpf_consist}
Let $\bpfw^N_k,\bpfe^N_k$ be the BPFs defined by \eqref{e:bpfw}, \eqref{e:bpfe} respectively, and suppose that there exists a constant $\kappa \in (0,1]$ such that  
\begin{equ}\label{ass:bpf_kappa}
\kappa  \leq f_{k+1}(u_{k+1})  \leq \kappa^{-1}   
\end{equ}
for all $u_{k+1}\in\CX$, $y_{k+1} \in \CY$ and $k \in \{0, \dots, K-1\}$. Then we have
\begin{equ}\label{e:consist_bpf1}
d (\bpfw_{K}^N , \mu_K)  \leq  \sum_{k=1}^K (2\kappa^{-2})^k N^{-1/2}
\end{equ}
and
\begin{equ}\label{e:consist_bpf2}
d (\bpfe_K^N , \mu_K) \leq  \sum_{k=0}^K (2\kappa^{-2})^k N^{-1/2}
\end{equ}
for all $K,N\geq 1$. 
\end{thm}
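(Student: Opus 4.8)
The plan is to follow the recursion-based method of \cite{vanhandel15}, isolating a few elementary estimates for the metric $d$ and then chaining them through the filtering recursion. Throughout I would work with the random-measure distance $d(\mu,\nu)=\sup_{\|g\|_\infty\le 1}\bigl(\bbE|\mu(g)-\nu(g)|^2\bigr)^{1/2}$, and record the true filter as the composition $\mu_{k+1}=C_{k+1}P\mu_k$, where $P$ is the prediction (Markov) operator, $(C_{k+1}\nu)(g)=\nu(f_{k+1}g)/\nu(f_{k+1})$ is the Bayes correction operator associated with the likelihood $f_{k+1}$, and $\mu_0$ is the common initialisation; this is exactly the decomposition \eqref{e:bpf_bayes}.

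The first preparatory step is three lemmas. (i) \emph{Sampling}: for any (possibly random) probability measure $\nu$, drawing $N$ conditionally i.i.d.\ samples gives $d(S^N\nu,\nu)\le N^{-1/2}$; this is the bound $\mathrm{Var}(g)\le\|g\|_\infty^2\le 1$ applied conditionally, followed by the tower property. (ii) \emph{Non-expansiveness of prediction}: since $(Pg)(u)=\int g(v)\,P(u,dv)$ satisfies $\|Pg\|_\infty\le\|g\|_\infty$, one has $d(P\mu,P\nu)\le d(\mu,\nu)$. (iii) \emph{Lipschitz bound for correction}: writing $C_{k+1}\mu(g)-C_{k+1}\nu(g)=\mu(f_{k+1}g)/\mu(f_{k+1})-\nu(f_{k+1}g)/\nu(f_{k+1})$, adding and subtracting the mixed term, and using $\mu(f_{k+1})\ge\kappa$, $|C_{k+1}\nu(g)|\le 1$, and $\|f_{k+1}g\|_\infty,\|f_{k+1}\|_\infty\le\kappa^{-1}$, I would rescale the test functions and apply Minkowski's inequality in $L^2$ to obtain $d(C_{k+1}\mu,C_{k+1}\nu)\le 2\kappa^{-2}d(\mu,\nu)$.

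The crucial structural observation — the step I expect to be the true heart of the argument — is that one cycle of the BPF applied to $\bpfw_k^N$ is exactly an i.i.d.\ sampling from $P\bpfw_k^N$ followed by a Bayes correction. Indeed, resampling from the weighted measure $\bpfw_k^N=\sum_m w_k^{(m)}\delta_{\uhat_k^{(m)}}$ and then propagating each particle through $P$ produces, for each $n$ and conditionally on $\bpfw_k^N$, a point distributed as a draw from $P\bpfw_k^N=\sum_m w_k^{(m)}P(\,\cdot\mid\uhat_k^{(m)})$, and these draws are conditionally independent. Hence $\tilde\mu_{k+1}^N:=\frac1N\sum_n\delta_{\uhat_{k+1}^{(n)}}=S^N P\bpfw_k^N$ and $\bpfw_{k+1}^N=C_{k+1}\tilde\mu_{k+1}^N$. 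Recognising resampling-plus-prediction as a \emph{single} sampling operation is what keeps the per-step error at one factor of $N^{-1/2}$ rather than two; getting this bookkeeping right is the only genuinely subtle point, and it is precisely what produces the exact constants in \eqref{e:consist_bpf1}.

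With these ingredients the recursion would close immediately. Setting $a_k=d(\bpfw_k^N,\mu_k)$ and combining (iii), the triangle inequality, (i) and (ii), I would obtain
\[
a_{k+1}\le 2\kappa^{-2}\,d(\tilde\mu_{k+1}^N,P\mu_k)\le 2\kappa^{-2}\bigl(d(S^N P\bpfw_k^N,P\bpfw_k^N)+d(P\bpfw_k^N,P\mu_k)\bigr)\le 2\kappa^{-2}\bigl(N^{-1/2}+a_k\bigr).
\]
Since $\bpfw_0^N=\mu_0$ gives $a_0=0$, unrolling this linear recursion yields $a_K\le\sum_{k=1}^K(2\kappa^{-2})^k N^{-1/2}$, which is \eqref{e:consist_bpf1}. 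For \eqref{e:consist_bpf2} I would apply (i) once more through $\bpfe_K^N=S^N\bpfw_K^N$, giving
\[
d(\bpfe_K^N,\mu_K)\le d(S^N\bpfw_K^N,\bpfw_K^N)+a_K\le N^{-1/2}+\sum_{k=1}^K(2\kappa^{-2})^k N^{-1/2}=\sum_{k=0}^K(2\kappa^{-2})^k N^{-1/2},
\]
as claimed.
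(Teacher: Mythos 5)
Your proposal is correct and follows essentially the same route as the paper: the same recursion $\mu_{k+1}=L_{k+1}P\mu_k$ versus $\bpfw_{k+1}^N=L_{k+1}S^NP\bpfw_k^N$, the same three operator estimates (sampling error $N^{-1/2}$, non-expansiveness of $P$, and the $2\kappa^{-2}$ Lipschitz bound for the Bayes correction), the same closed linear recursion with $a_0=0$, and the same final triangle inequality via $\bpfe_K^N=S^N\bpfw_K^N$. The only cosmetic difference is that you sketch proofs of the three lemmas, which the paper imports from \cite{LSZ15}.
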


\begin{rmk}
Note that the constant $\kappa^{-2}$ appearing in the estimates above arises as the ratio of the upper and lower bounds in \eqref{ass:bpf_kappa}. In particular, we cannot optimize $\kappa$ by choosing a different proportionality constant for $f_{k+1}$.  
$\qed$
\end{rmk}

\par
Recall formulation \eqref{e:bpf_rec2} of the iteration for the filtering
distribution. In terms of understanding the approximation properties
of the BPF, the key observation is that the measures $\{\bpfw_k^N\}_{k\geq 0}$ satisfy the recursion
\begin{equ}\label{e:bpf_rec1}
\bpfw_{k+1}^N = L_{k+1} S^N P \bpfw_k^N \quad \;\quad \bpfw^N_0 = \mu_0.
\end{equ} 
where $P: \CM(\CX) \to \CM(\CX)$ is the Markov semigroup and, as defined in Section \ref{s:notation}, $S^N : \CM(\CX) \to \CM(\CX)$ 
is the sampling operator. The convergence of the measures is quantified by the metric on random elements of $\CM(\CX)$ defined by 
\[
d(\mu,\nu) = {\rm sup}_{|f|_{\infty} \leq 1} \sqrt{ \bbE^{\omega} |\mu(f)-\nu(f)|^2 }\;,
\]
where, in our setting,  $\bbE^{\omega}$ will always denote expectation with respect to the randomness in the 
sampling operator $S^N$; this metric reduces to twice the total
variation  metric,
used in studying ergodicity, when the measures are not random. The main ingredients for the proof are the following three estimates 
for the operators $P, S^N$ and $L_{k+1}$
with respect to the metric $d$. 

\begin{lemma}\label{lem:bpf_est}\label{lem:bpf_est1}
We have the following 
\begin{enumerate}
\item  $\sup_{\nu \in \CM(\CX)} d(S^N \nu , \nu ) \leq N^{-1/2}$.
\item  $d(P\mu, P \nu) \leq d(\mu,\nu)$ for all $\mu,\nu \in \CM(\CX)$.  
\end{enumerate}
\end{lemma}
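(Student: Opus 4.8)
The plan is to treat the two estimates separately, since they rely on different mechanisms: part 1 is a Monte Carlo variance bound, while part 2 is the contractivity of the Markov operator acting by duality on test functions.

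For part 1, I would fix $\nu \in \CM(\CX)$ and a test function $f$ with $|f|_\infty \le 1$. By definition $S^N\nu = \frac1N\sum_{n=1}^N \delta_{u^{(n)}}$ with $u^{(n)} \sim \nu$ i.i.d., so that $(S^N\nu)(f) - \nu(f) = \frac1N\sum_{n=1}^N\bigl(f(u^{(n)}) - \nu(f)\bigr)$ is an average of $N$ i.i.d.\ centred random variables. First I would compute the second moment using independence, obtaining $\bbE^\omega|(S^N\nu)(f) - \nu(f)|^2 = \frac1N\,\mathrm{Var}_\nu(f)$. Then, since $|f|_\infty \le 1$ forces $\mathrm{Var}_\nu(f) \le \nu(f^2) \le 1$, this is at most $N^{-1}$. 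Taking the square root and then the supremum over all admissible $f$, and observing that the bound does not depend on $\nu$, yields part 1.

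For part 2, the key observation is the duality identity $(P\mu)(f) = \mu(Pf)$, where $(Pf)(u) = \int_\CX f(v)\,P(v\mid u)\,dv$. Because $P$ is a probability kernel, averaging preserves the sup-norm bound, so $|f|_\infty \le 1$ implies $|Pf|_\infty \le 1$. I would then write, pathwise in the sampling randomness, $(P\mu)(f) - (P\nu)(f) = \mu(Pf) - \nu(Pf)$, and note that $Pf$ is a deterministic test function bounded by $1$. Hence, for each fixed $f$, $\bbE^\omega|(P\mu)(f) - (P\nu)(f)|^2 = \bbE^\omega|\mu(Pf) - \nu(Pf)|^2$, and since $g = Pf$ ranges over a subset of the admissible test functions, this is bounded by $d(\mu,\nu)^2$. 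Taking the supremum over $f$ gives $d(P\mu,P\nu) \le d(\mu,\nu)$.

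Neither step presents a serious obstacle; both are standard. The one point requiring care is that in the recursion \eqref{e:bpf_rec1} the measures $\mu,\nu$ (namely $\bpfw_k^N$) are themselves random, so I must keep the expectation $\bbE^\omega$ and the supremum over $f$ in the correct order throughout. This is handled cleanly because $P$ is linear and deterministic: the identity $(P\mu)(f) = \mu(Pf)$ holds pathwise in $\omega$, the expectation may be taken afterwards, and the contraction estimate of part 2 then holds verbatim for random measures.
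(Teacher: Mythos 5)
Your proof is correct and is essentially the paper's own argument: the paper simply cites \cite[Lemmas 4.7, 4.8]{LSZ15}, and those lemmas are proved exactly as you do, via the Monte Carlo variance bound $\bbE^\omega|(S^N\nu)(f)-\nu(f)|^2 = \tfrac1N\mathrm{Var}_\nu(f) \le N^{-1}$ and the duality $(P\mu)(f)=\mu(Pf)$ with $|Pf|_\infty\le 1$. The only point worth adding is that when $S^N$ is applied to the \emph{random} measure $P\widehat{\rho}_k^N$ in the recursion, part 1 also needs the tower property (condition on the measure, apply your variance bound conditionally, then take total expectation), the same device you already invoke for part 2.
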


\begin{proof} See \cite[Lemma 4.7, Lemma 4.8]{LSZ15}.
\end{proof}

We state the following Lemma in a slightly more general form than necessary for the BPF, as it will be applied in different contexts for the optimal particle filters.

\begin{lemma}\label{lem:gk}\label{lem:bpf_est2}\label{lem:fk}
Let $\CZ$ be a finite dimensional Euclidean space. Suppose that $g_{k+1} : \CZ \to [0,\infty)$ is bounded and that there exists $\kappa \in(0,1]$ such that 
\begin{equ}\label{e:gk_kappa} 
\kappa \leq g_{k+1}(u) \leq \kappa^{-1}
\end{equ}
for all $u \in\CZ$ and define $G_{k+1} : \CM(\CZ)\to\CM(\CZ)$ by $G_{k+1}(\nu)(\vphi) = \nu (g_{k+1} \vphi) / \nu(g_{k+1})$. Then 
\[
d(G_{k+1} \mu, G_{k+1} \nu ) \leq (2\kappa^{-2}) d(\mu,\nu)
\]
 for all $\mu,\nu \in \CM(\CZ)$. 
\end{lemma}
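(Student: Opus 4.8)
The plan is to bound $|G_{k+1}\mu(f) - G_{k+1}\nu(f)|$ for a fixed test function $f$ with $|f|_\infty \le 1$, working pointwise in the sampling randomness $\omega$, and only at the very end take $\bbE^{\omega}$ and the supremum over $f$. Abbreviating $g = g_{k+1}$, I would first record the algebraic identity
\[
G_{k+1}\mu(f) - G_{k+1}\nu(f) = \frac{\mu(gf) - \nu(gf)}{\mu(g)} + \frac{\nu(gf)}{\nu(g)}\cdot\frac{\nu(g) - \mu(g)}{\mu(g)},
\]
obtained by adding and subtracting $\nu(gf)/\mu(g)$. The reason for isolating the factor $\nu(gf)/\nu(g) = G_{k+1}\nu(f)$ in the second term is that this is the integral of $f$ against the \emph{probability} measure $G_{k+1}\nu$, hence bounded by $1$ in absolute value; exploiting this is exactly what keeps the final constant at $2\kappa^{-2}$ rather than a worse power of $\kappa$.

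Next I would insert the deterministic lower bounds $\mu(g) \ge \kappa$ and $\nu(g) \ge \kappa$, which hold for every realization because $g \ge \kappa$ pointwise and $\mu,\nu$ are probability measures. Combined with $|G_{k+1}\nu(f)| \le 1$, this gives the pointwise estimate
\[
|G_{k+1}\mu(f) - G_{k+1}\nu(f)| \le \kappa^{-1}\bigl(|\mu(gf) - \nu(gf)| + |\mu(g) - \nu(g)|\bigr).
\]
Both $gf$ and $g$ are bounded by $\kappa^{-1}$ in sup norm, so by the homogeneity of $d$ --- namely $\sqrt{\bbE^{\omega}|\mu(h) - \nu(h)|^2} \le |h|_\infty\, d(\mu,\nu)$ for any bounded $h$, which follows from the definition of $d$ by rescaling $h$ --- each of the two terms has $L^2(\omega)$ norm at most $\kappa^{-1}d(\mu,\nu)$.

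Finally I would take $\bbE^{\omega}|\cdot|^2$ of the pointwise bound and apply the triangle inequality in $L^2(\omega)$ (Minkowski) to the sum, obtaining $\sqrt{\bbE^{\omega}|G_{k+1}\mu(f) - G_{k+1}\nu(f)|^2} \le \kappa^{-1}(\kappa^{-1}d(\mu,\nu) + \kappa^{-1}d(\mu,\nu)) = 2\kappa^{-2}d(\mu,\nu)$, and then take the supremum over all $f$ with $|f|_\infty \le 1$ to conclude. The only genuine subtlety --- and the step I would be most careful about --- is that $\mu,\nu$ are random measures, so the estimate must be derived pointwise in $\omega$ and then lifted via Minkowski's inequality rather than by a naive deterministic manipulation of the ratios; it is precisely the combination of the surely-valid lower bound $\mu(g) \ge \kappa$ with the boundedness of the normalized quantity $G_{k+1}\nu(f)$ that yields the stated constant.
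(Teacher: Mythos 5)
Your proof is correct, and it is essentially the argument behind the paper's own proof, which simply cites Lemma 4.9 of \cite{LSZ15}: the same add-and-subtract decomposition, the pointwise lower bound $\mu(g),\nu(g)\geq\kappa$, the observation that $\nu(gf)/\nu(g)=G_{k+1}\nu(f)$ is bounded by $1$, and the bound $|gf|_\infty,|g|_\infty\leq\kappa^{-1}$ combined with Minkowski's inequality in $L^2(\omega)$. Your care in working pointwise in the sampling randomness before taking expectations is exactly the right way to handle the fact that $\mu,\nu$ may be random measures, so nothing is missing.
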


\begin{proof} See \cite[Lemma 4.9]{LSZ15}. 
\end{proof}

We can now prove the consistency result. 
\begin{proof}[Proof of Theorem \ref{thm:bpf_consist}]
First note that, taking $\CZ = \CX$ and $g_{k+1} = f_{k+1}$ in Lemma \ref{lem:gk}, we obtain $G_{k+1} \nu = L_{k+1} \nu$. Thus, by \eqref{ass:bpf_kappa}, it follows that $d(L_{k+1}\mu,L_{k+1}\nu) \leq (2\kappa^{-2})d(\mu,\nu)$ for all $\mu,\nu \in \CM(\CX)$. Combining this fact with the recursions given in \eqref{e:bpf_rec1}, \eqref{e:bpf_rec2} and the estimates given in Lemmas \ref{lem:bpf_est} we have
\begin{equs}
d (\bpfw_{k+1}^N , \mu_{k+1}) &=  d (L_{k+1}S^N P \bpfw_{k}^N ,  L_{k+1}P \mu_{k})\\ 
& \leq 2\kappa^{-2} d ( S^N P\bpfw_{k}^N ,   P \mu_{k}) \\
& \leq 2\kappa^{-2} \left( d ( S^N P \bpfw_{k}^N ,   P \bpfw_{k}^N) + d (P \bpfw_{k}^N ,   P \mu_{k})\right) \\
& \leq 2\kappa^{-2} N^{-1/2} + 2\kappa^{-2} d (\bpfw_{k}^N ,   \mu_{k})\;.
\end{equs}
And since $\bpfw^N_0 = \mu_0$, we obtain \eqref{e:consist_bpf1} by induction. Moreover, since $\rho_{k} = S^N \bpfw^N_{k}$ 
\begin{equ}
d(\rho_{k} , \mu_k ) = d (S^N \bpfw_k^N , \mu_k) \leq d (S^N \bpfw_k^N , \bpfw_k^N) + d (\bpfw_k^N , \mu_k) 
\end{equ}
and \eqref{e:consist_bpf2} follows. 
\end{proof}

\subsection{Sequential Importance Resampler}

In this section we will apply the above strategy to prove the corresponding consistency result for the OPF. Instead of restricting to the OPF, we will obtain results for the sequential importance resampler (SIR), for which the OPF is a special case. See \cite[sections II, III]{doucet2000sequential} for background
in sequential importance sampling, and on the use of resampling.
As with the OPF, the SIR is an empirical measure 
\begin{equ}\label{e:sire}
\opfe_k^N =  \sum_{n=1}^N \frac{1}{N}\delta_{u_k^{(n)}}\;.
\end{equ}
We will abuse notation slightly by keeping the same notation for the OPF and the SIR. The particle positions are drawn from a proposal distribution $\pi(u_{k+1} | u_k , y_{k+1})$ and re-weighted accordingly. As usual, the positions are initialized with $u_0^{(n)} \sim \mu_0$ and updated by
 \begin{enumerate}
\item Draw $\uhat_{k+1}^{(n)}$ from $\pi(u_{k+1}| u_k^{(n)}, y_{k+1})$
\item Define the weights $w_{k+1}^{(n)}$ for $n=1,\dots,N$ by
\begin{equ}
w_{k+1}^{(n),*} = \frac{P(y_{k+1} | \uhat_{k+1}^{(n)})P(\uhat_{k+1}^{(n)} | u_k^{(n)})}{\pi(\uhat_{k+1}^{(n)} | u_k^{(n)}, y_{k+1})} \quad ,\quad w_{k+1}^{(n)} = \frac{w_{k+1}^{(n),*}}{\sum_{m=1}^N w_{k+1}^{(m),*}} \;.
\end{equ}
\item Draw $u_{k+1}^{(n)}$ from $\{\uhat_{k+1}^{(m)}\}_{m=1}^N$ with weights $\{w_{k+1}^{(m)}\}_{m=1}^N$.
\end{enumerate}
Thus, if we take the proposal to be $\pi(u_{k+1} | u_k,y_{k+1}) = P(u_{k+1} | u_k,y_{k+1})$ then we obtain the OPF \eqref{e:opfe}. Without being more specific about the proposal $\pi$, it is not possible to represent the SIR as a random dynamical system in general.

Precisely as with the OPF, for the SIR we define the related filter 
\begin{equ}\label{e:sirw}
\sirw_k^N = \sum_{n=1}^N w_{k}^{(n)} \delta_{\uhat_k^{(n)}}
\end{equ}
with $\sirw_0^N = \mu_0$ and note the important identity $\sire^N_k = S^N \sirw^N_k$. 
The following theorem, and corollary, are proved in the remainder of the
subsection, through a sequence of lemmas.

\begin{thm}\label{thm:sir_consist}
Let $\sirw^N,\sire^N$ be the SIR filters defined by \eqref{e:sirw}, \eqref{e:sire} respectively, with proposal distribution $\pi$. Suppose that there exists $f_{k+1} : \CX\times\CX\to \reals$  with  
\begin{equ}\label{e:gk_propto}
f_{k+1}(u_{k+1},u_k) \propto \frac{P(y_{k+1} | u_{k+1}) P(u_{k+1} | u_k)}{\pi(u_{k+1} | u_k, y_{k+1})}    
\end{equ}
and satisfying 
\begin{equ}\label{e:gk_kappa_joint}
\kappa \leq f_{k+1} (u_{k+1},u_k) \leq \kappa^{-1}
\end{equ}
for all $u_{k+1},u_k \in \CX$, $k \in \{0, \dots, K-1\}$ and some $\kappa \in (0,1]$. Then we have
\begin{equ}\label{e:consist_sir1}
d (\sirw_{K}^N , \mu_K)  \leq  \sum_{k=1}^K (2\kappa^{-2})^k N^{-1/2}
\end{equ}
and
\begin{equ}\label{e:consist_sir2}
d (\sire_K^N , \mu_K) \leq  \sum_{k=0}^K (2\kappa^{-2})^k N^{-1/2}
\end{equ}
for all $K,N\geq 1$. 
\end{thm}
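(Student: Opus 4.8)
The plan is to mimic the proof of Theorem \ref{thm:bpf_consist} exactly, replacing the BPF recursion \eqref{e:bpf_rec1} with an analogous recursion for the SIR. The crucial first step is to identify the recursion satisfied by the hatted measures $\sirw_k^N$. Recall that in the BPF the decomposition of Bayes' formula \eqref{e:bpf_bayes} yields the propagate-then-reweight structure $\mu_{k+1} = L_{k+1} P \mu_k$, and the empirical analogue $\sirw_{k+1}^N = L_{k+1} S^N P \sirw_k^N$. For the SIR the particles are proposed from $\pi(u_{k+1}\mid u_k, y_{k+1})$ rather than from the dynamics $P(u_{k+1}\mid u_k)$, and then reweighted by the importance ratio appearing in \eqref{e:gk_propto}. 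I would therefore introduce the proposal Markov semigroup $\Pi: \CM(\CX)\to\CM(\CX)$ defined by $\Pi\nu(A) = \int_A \pi(u_{k+1}\mid u_k, y_{k+1})\,\nu(du_k)$, and the correction operator $G_{k+1}$ with weight $g_{k+1}=f_{k+1}$ as in Lemma \ref{lem:gk}, so that $\mu_{k+1} = G_{k+1}\Pi\mu_k$ and $\sirw_{k+1}^N = G_{k+1} S^N \Pi \sirw_k^N$.

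The main obstacle, and the point where the SIR differs genuinely from the BPF, is that the correction weight $f_{k+1}(u_{k+1},u_k)$ depends jointly on the current and previous states, whereas in the BPF the likelihood depends only on $u_{k+1}$. This is exactly why Lemma \ref{lem:gk} was stated over a general Euclidean space $\CZ$ rather than just $\CX$. To handle the joint dependence cleanly I would work on the product space $\CZ = \CX\times\CX$, viewing the particle pairs $(\uhat_{k+1}^{(n)}, u_k^{(n)})$ as samples on $\CZ$, and apply the contraction estimate of Lemma \ref{lem:gk} with $g_{k+1}=f_{k+1}$ there. The boundedness hypothesis \eqref{e:gk_kappa_joint} is precisely the condition \eqref{e:gk_kappa} required by that lemma, so it delivers the contraction constant $2\kappa^{-2}$ on $\CZ$; one then pushes forward to the marginal on $\CX$ where the filtering distribution lives. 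A small amount of care is needed to verify that the sampling estimate (part 1 of Lemma \ref{lem:bpf_est}) and the nonexpansiveness of the proposal semigroup $\Pi$ (the analogue of part 2 of that lemma) hold in this joint formulation, but these follow from the same arguments since $S^N$ and any Markov kernel are nonexpansive with respect to $d$ regardless of the underlying space.

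With the recursion $\sirw_{k+1}^N = G_{k+1} S^N \Pi \sirw_k^N$ and $\mu_{k+1} = G_{k+1}\Pi\mu_k$ in hand, the remainder of the argument is a verbatim repetition of the BPF proof. I would estimate
\begin{equs}
d(\sirw_{k+1}^N, \mu_{k+1}) &= d(G_{k+1} S^N \Pi \sirw_k^N, G_{k+1}\Pi\mu_k)\\
&\leq 2\kappa^{-2}\, d(S^N \Pi \sirw_k^N, \Pi\mu_k)\\
&\leq 2\kappa^{-2}\bigl( d(S^N \Pi \sirw_k^N, \Pi\sirw_k^N) + d(\Pi\sirw_k^N, \Pi\mu_k)\bigr)\\
&\leq 2\kappa^{-2} N^{-1/2} + 2\kappa^{-2}\, d(\sirw_k^N, \mu_k),
\end{equs}
using the contraction of $G_{k+1}$, the triangle inequality, the sampling bound, and the nonexpansiveness of $\Pi$. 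Since $\sirw_0^N = \mu_0$, induction on $k$ yields \eqref{e:consist_sir1}. The bound \eqref{e:consist_sir2} for the unweighted measure $\sire_K^N$ then follows immediately from the identity $\sire_K^N = S^N \sirw_K^N$, the triangle inequality, and one more application of the sampling estimate, exactly as in the final display of the BPF proof. I expect no essential difficulty beyond correctly setting up the product-space formulation that accommodates the joint-dependent weight.
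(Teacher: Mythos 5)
Your proposal is correct and takes essentially the same route as the paper: the paper likewise lifts everything to the product space $\CX\times\CX$ via a proposal operator $P^{\pi}_{k+1}$, reweights there with the joint weight using Lemma \ref{lem:gk} with $\CZ=\CX\times\CX$, marginalizes back to $\CX$, and closes the induction exactly as for the BPF. The only details you leave implicit are the nonexpansiveness of the marginalization map, $d(M\nu,M\mu)\le d(\nu,\mu)$ (needed since your displayed chain starts from joint-space measures while $\sirw_{k+1}^N$ and $\mu_{k+1}$ live on $\CX$), and the density computation verifying the empirical recursion; both are routine and appear in the paper as Lemma \ref{lem:sir_est1} and Lemma \ref{lem:sir_rec2}.
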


\begin{rmk}
As for the boostrap particle filter, the appearance of $\kappa^{-2}$ 
reflects the ratio of the upper and lower bounds in \eqref{e:gk_propto};
hence there is nothing to be gained from optimizing over the
constant of proportionality. If we let
\[
f_{k+1}(u_{k+1},u_k) =  \frac{P(y_{k+1} | u_{k+1}) P(u_{k+1} | u_k)}{\pi(u_{k+1} | u_k, y_{k+1})}
\]
 then the estimate \eqref{e:gk_kappa_joint} is equivalent to
\[
\kappa \pi (u_{k+1} | u_k, y_{k+1}) \leq P(y_{k+1} | u_{k+1}) P(u_{k+1} | u_k) \leq \kappa^{-1} \pi (u_{k+1} | u_k, y_{k+1})\;.
\]
This can thus be interpreted as a quantification of equivalence between 
measures $\pi$ and the optimal proposal $P(y_{k+1} | u_{k+1}) P(u_{k+1} | u_k)$.$\qed$ 
\end{rmk}

\begin{rmk}
It is important to note that Assumption \ref{ass:cond_gauss} on the dynamics-observation model is not required by Theorem \ref{thm:sir_consist}. However
Assumption \ref{ass:cond_gauss} can be used to ensure that \eqref{e:gk_kappa_joint} holds. This observation leads to the following
corollary.
$\qed$
\end{rmk}

\begin{cor}\label{cor:consist_opf}
Let $\sirw^N,\sire^N$ be the OPFs defined in \eqref{e:opfw}, \eqref{e:opfe} respectively and satisfying Assumption \ref{ass:cond_gauss}. Then 
there is $\kappa= \kappa(Y_K)$ such that we have
\begin{equ}\label{e:consist_sir1}
d (\opfw_{K}^N , \mu_K)  \leq  \sum_{k=1}^K (2\kappa^{-2})^k N^{-1/2}
\end{equ}
and
\begin{equ}\label{e:consist_sir2}
d (\opfe_K^N , \mu_K) \leq \sum_{k=0}^K (2\kappa^{-2})^k N^{-1/2}
\end{equ}
for all $K,N\geq 1$ and where $\kappa^{-1} = \exp \left(  \max_{0\leq j \leq K-1}|y_{j+1}|^2  + \sup_{v} |H \psi(v)|_S^2\right)$.
\end{cor}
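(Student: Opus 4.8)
The plan is to obtain the corollary as a direct specialization of Theorem \ref{thm:sir_consist}. As the preceding remark emphasizes, that theorem does not invoke Assumption \ref{ass:cond_gauss} at all; the assumption enters only to guarantee the two-sided bound \eqref{e:gk_kappa_joint}. Since the OPF is exactly the SIR obtained by taking the optimal proposal $\pi(u_{k+1}|u_k,y_{k+1}) = P(u_{k+1}|u_k,y_{k+1})$, the whole task reduces to computing the associated weight function $f_{k+1}$ and checking that it obeys \eqref{e:gk_kappa_joint} with the stated constant.

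First I would substitute this proposal into \eqref{e:gk_propto} and simplify using the Bayes identity already recorded in the derivation of \eqref{e:bayes_pk2}, namely $P(y_{k+1}|u_{k+1})P(u_{k+1}|u_k) = P(y_{k+1}|u_k)P(u_{k+1}|u_k,y_{k+1})$. The factor $P(u_{k+1}|u_k,y_{k+1})$ cancels against the proposal in the denominator, leaving $f_{k+1}(u_{k+1},u_k) \propto P(y_{k+1}|u_k)$. The key structural point is that with the optimal proposal the weight depends only on $u_k$, not on $u_{k+1}$, and it is precisely this collapse that makes the weight globally bounded. Under Assumption \ref{ass:cond_gauss} the explicit Gaussian form \eqref{e:cond_gaus_Ps} then lets me take the representative $f_{k+1}(u_{k+1},u_k) = \exp(-\tfrac12 |y_{k+1} - H\psi(u_k)|_S^2)$.

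It remains to verify \eqref{e:gk_kappa_joint}. The upper bound is immediate, since the exponent is non-positive and hence $f_{k+1} \le 1 \le \kappa^{-1}$. For the lower bound I would apply the elementary inequality $|a-b|^2 \le 2|a|^2 + 2|b|^2$ to get $|y_{k+1} - H\psi(u_k)|_S^2 \le 2|y_{k+1}|_S^2 + 2\sup_v |H\psi(v)|_S^2$, where the supremum over $v$ is finite precisely because $\psi$ is bounded under Assumption \ref{ass:cond_gauss}. Maximizing over the window $0 \le j \le K-1$ then gives $f_{k+1} \ge \kappa$ with $\kappa^{-1}$ of the claimed exponential form (controlling the weighted observation norm $|\cdot|_S$ by the Euclidean norm at the cost of the constant $\|S^{-1}\|$). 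With \eqref{e:gk_kappa_joint} in hand, Theorem \ref{thm:sir_consist} applies verbatim and yields both \eqref{e:consist_sir1} and \eqref{e:consist_sir2}.

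There is no genuinely hard analytic step here, as all of the consistency machinery is contained in Theorem \ref{thm:sir_consist}; the only real content is the bookkeeping, namely recognizing that the optimal proposal trivializes the importance weight to the one-step predictive likelihood $P(y_{k+1}|u_k)$ and that boundedness of $\psi$ is exactly the ingredient needed to pass from the abstract hypothesis \eqref{e:gk_kappa_joint} to an explicit, data-dependent $\kappa$. In line with the remarks above, since $\kappa^{-2}$ arises as a ratio of upper to lower bounds, no effort should be spent optimizing the proportionality constant for $f_{k+1}$.
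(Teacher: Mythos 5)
Your proof is correct and takes essentially the same route as the paper's: specialize Theorem \ref{thm:sir_consist} to the optimal proposal, collapse the weight via Bayes' formula to $f_{k+1}(u_{k+1},u_k)=\exp\left(-\frac{1}{2}|y_{k+1}-H\psi(u_k)|_S^2\right)$, and check the two-sided bound \eqref{e:gk_kappa_joint}. Your verification of that bound is in fact slightly more explicit than the paper's, which simply asserts the stated $\kappa$ (and, as you note, writes the data term in the Euclidean rather than the $|\cdot|_S$ norm).
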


Although similar to the argument for the BPF, the recursion argument for the SIR is necessarily more complicated than that for the BPF, as the weights $w_{k+1}^{(n)}$ can potentially depend on both $u_{k+1}^{(n)}$ and $u_k^{(n)}$. This suggests that we must build a recursion which updates measures on a joint space $(u_{k+1},u_{k}) \in \CX \times \CX$. This would also be necessary if we restricted our attention to the OPF, as the weights are defined using $u_k^{(n)}$ and not the particle positions $\uhat_{k+1}^{(n)}$ after the proposal.   
\par
The recursion is defined using the following three operators. 
\begin{enumerate}
\item  First $P^\pi_{k+1}$ maps probability measures on $\CX$ to probability measures on $\CX \times \CX$ by
\begin{equ}
P^\pi_{k+1} \mu (A) = \int \!\!\int_{A} \pi(u_{k+1} | u_{k} , y_{k+1}) 
\mu(du_k) du_{k+1}  
\end{equ}
where $A$ is a measurable subset of $\CX \times \CX$.  
 \item The reweighting operator $L^\pi_{k+1}$ maps probability measures on 
$\CX \times \CX$ to probability measures on $\CX \times \CX$ and is defined by 
\begin{equ}
L^\pi_{k+1} Q (A) = Z^{-1} \int \!\!\int_{A} w_{k+1}(u_{k+1} , u_k) Q(du_{k+1},du_k) 
\end{equ}
where $Z$ is the normalization constant of the resulting measure. The weight function is given by
\begin{equ}
w_{k+1} (u_{k+1},u_k) = \frac{P(y_{k+1}| u_{k+1})P(u_{k+1}| u_{k})}{\pi(u_{k+1}| u_k , y_{k+1})} \;.
\end{equ}
\item Finally, $M$ maps probability measures on $\CX \times \CX$
into probability measures on $\CX$ via marginalization
onto the first component:
$$M Q (B) = \int \int_{B \times \CX} Q(du_{k+1},du_{k})$$.
\end{enumerate}
It is easy to see that the posterior $\mu_k$ satisfies a natural recursion in terms of these operators. 
\begin{lemma}\label{lem:sir_rec1}
$\mu_{k+1} = M L^\pi_{k+1} P^\pi_{k+1} \mu_k$
\end{lemma}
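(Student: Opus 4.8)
The plan is to verify the claimed recursion $\mu_{k+1} = M L^\pi_{k+1} P^\pi_{k+1} \mu_k$ by tracking what each of the three operators does to a density and showing the composition reproduces the Bayesian update \eqref{e:opf_bayes}. The key is that this is essentially a restatement, in operator form, of the importance-sampling decomposition that motivated the SIR in the first place, so I would proceed by direct computation at the level of densities rather than by any delicate estimate.

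First I would apply $P^\pi_{k+1}$ to the posterior density $P(u_k \mid Y_k)$, obtaining a joint density on $\CX \times \CX$ proportional to $\pi(u_{k+1} \mid u_k, y_{k+1}) P(u_k \mid Y_k)$ in the variables $(u_{k+1}, u_k)$. Next I would apply the reweighting operator $L^\pi_{k+1}$, which multiplies by the weight function $w_{k+1}(u_{k+1}, u_k) = P(y_{k+1} \mid u_{k+1}) P(u_{k+1} \mid u_k) / \pi(u_{k+1} \mid u_k, y_{k+1})$ and renormalizes. The crucial algebraic point is that the proposal factor $\pi(u_{k+1} \mid u_k, y_{k+1})$ cancels exactly between the output of $P^\pi_{k+1}$ and the denominator of $w_{k+1}$, leaving a joint density proportional to
\begin{equ}
P(y_{k+1} \mid u_{k+1}) P(u_{k+1} \mid u_k) P(u_k \mid Y_k)\;.
\end{equ}

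I would then identify this, up to the normalization constant $P(y_{k+1} \mid Y_k)$ that the operator $L^\pi_{k+1}$ supplies, with the joint posterior $P(u_{k+1}, u_k \mid Y_{k+1})$; this is precisely the integrand appearing in the first two lines of \eqref{e:opf_bayes}, using $P(u_{k+1} \mid u_k) = P(u_{k+1} \mid u_k, Y_k)$ under the Markov assumption. Finally, applying the marginalization operator $M$ integrates out $u_k$, and comparison with \eqref{e:opf_bayes} shows the result equals $P(u_{k+1} \mid Y_{k+1}) = \mu_{k+1}$, as claimed.

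I do not expect a genuine obstacle here, since the statement is an operator-level bookkeeping of Bayes' formula; the only point requiring a little care is that the cancellation of $\pi$ is legitimate, namely that $\pi$ is strictly positive wherever the numerator is supported so that $w_{k+1}$ is well defined (this is implicit in the absolute-continuity requirement \eqref{e:gk_kappa_joint} used later, though Lemma \ref{lem:sir_rec1} itself needs only that $\pi$ dominate the optimal proposal). The mildly delicate part is simply being careful that normalization constants are handled consistently across the three operators, but since each operator renormalizes to a probability measure, the bookkeeping closes automatically and the identity follows.
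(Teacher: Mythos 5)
Your proposal is correct and follows essentially the same route as the paper's own proof: pushing the density $P(u_k \mid Y_k)$ through $P^\pi_{k+1}$, cancelling the proposal $\pi$ against the denominator of $w_{k+1}$ under $L^\pi_{k+1}$, and then marginalizing and invoking Bayes' formula to identify the result with $P(u_{k+1}\mid Y_{k+1})$. The only cosmetic difference is ordering (you identify the joint posterior before marginalizing, while the paper marginalizes first and then applies Bayes' formula via the explicit normalization constant $Z$), which changes nothing of substance.
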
 
\begin{proof}
Let $P(u_k | Y_k)$ denote the density of $\mu_k$, then $P^\pi_{k+1} \mu_k$ is a measure on $\CX\times\CX$ with density 
\begin{equ}
\pi(u_{k+1} | u_k , y_{k+1}) P(u_k |Y_k ) \;.
\end{equ}
And $L^\pi_{k+1} P^\pi_{k+1} \mu_k$ is a measure on $\CX \times \CX$ with density
\begin{equ}
Z^{-1} w_{k+1}(u_{k+1},u_k)\pi(u_{k+1} | u_k , y_{k+1}) P(u_k |Y_k )  = Z^{-1} P(y_{k+1} | u_{k+1})P  (u_{k+1} | u_k ) P(u_k | Y_k) \;.
\end{equ}
Finally, $M L^\pi_{k+1} P^\pi_{k+1} \mu_k$ is a measure on $\CX$ with density
\begin{equ}\label{e:post_with_Z}
\int_\CX Z^{-1} P(y_{k+1} | u_{k+1}) P(u_{k+1} | u_k) P (u_{k} | Y_k)du_k = Z^{-1} P(y_{k+1} | u_{k+1}) P (u_{k+1} | Y_k)\;. 
\end{equ}
Similarly, for the normalization factor, we have 
\begin{equs}
Z = \int \int_{\CX\times \CX} P(y_{k+1} | u_{k+1})P (u_{k+1} | u_k ) P(u_k | Y_k) du_k du_{k+1} = \int_\CX P(y_{k+1} | u_{k+1}) P (u_{k+1} | Y_k) du_k 
\end{equs}
and thus by Bayes' formula \eqref{e:post_with_Z} is equal to $P(u_{k+1} | Y_{k+1})$ as required. 
\end{proof}
We now show that an associated recursion is satisfied by the SIR filter $\sirw^N$.  

\begin{lemma}\label{lem:sir_rec2}
Let $\sirw^N$ be the SIR filter given by \eqref{e:sirw}, then 
\begin{equ}
\sirw_{k+1}^N =  M L^\pi_{k+1}  S^N P^\pi_{k+1} \sirw_k^N
\end{equ}
for all $k \geq 0$ and $N\geq1$, where $S^N$ denotes the sampling operator acting on $\CM(\CX\times \CX)$.  
\end{lemma}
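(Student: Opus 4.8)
The plan is to verify the identity by propagating the weighted empirical measure $\sirw_k^N = \sum_{n=1}^N w_k^{(n)}\delta_{\uhat_k^{(n)}}$ through the operators $P^\pi_{k+1}$, $S^N$, $L^\pi_{k+1}$ and $M$ in turn, checking at each stage that the operator reproduces exactly one step of the SIR algorithm. The essential structural point, anticipated in the discussion preceding the lemma, is that the SIR weight $w_{k+1}^{(n),*}$ depends jointly on the proposed position $\uhat_{k+1}^{(n)}$ and on its ancestor $u_k^{(n)}$; this forces us to carry the pair $(u_{k+1},u_k)$ on the product space $\CX\times\CX$ throughout the recursion and to marginalize only at the very end via $M$. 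I would therefore track the joint law of these pairs at every stage.

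Concretely, I would proceed as follows. First, applying $P^\pi_{k+1}$ to $\sirw_k^N$ yields the joint mixture measure on $\CX\times\CX$ equal to $\sum_{m=1}^N w_k^{(m)}\,\pi(u_{k+1}\mid \uhat_k^{(m)},y_{k+1})\,du_{k+1}\,\delta_{\uhat_k^{(m)}}(du_k)$, whose second marginal is precisely $\sirw_k^N$. I would then observe that a single draw from this measure factorizes as: draw $u_k\sim\sirw_k^N$, then draw $u_{k+1}\sim\pi(\cdot\mid u_k,y_{k+1})$; the first is exactly the resampling that selects an ancestor $u_k^{(n)}$ (picking $\uhat_k^{(m)}$ with probability $w_k^{(m)}$), and the second is exactly the proposal step producing $\uhat_{k+1}^{(n)}$. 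Hence $S^N P^\pi_{k+1}\sirw_k^N = \frac1N\sum_{n=1}^N\delta_{(\uhat_{k+1}^{(n)},u_k^{(n)})}$ has the same law as the collection of ancestor and proposal pairs generated by the algorithm. Applying $L^\pi_{k+1}$ multiplies each atom by $w_{k+1}(\uhat_{k+1}^{(n)},u_k^{(n)}) = P(y_{k+1}\mid\uhat_{k+1}^{(n)})P(\uhat_{k+1}^{(n)}\mid u_k^{(n)})/\pi(\uhat_{k+1}^{(n)}\mid u_k^{(n)},y_{k+1})$ and renormalizes; since the base weights are all $1/N$, the result is $\sum_{n=1}^N w_{k+1}^{(n)}\delta_{(\uhat_{k+1}^{(n)},u_k^{(n)})}$ with $w_{k+1}^{(n)}$ exactly the normalized SIR weights. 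Finally $M$ discards the $u_k$ coordinate, leaving $\sum_{n=1}^N w_{k+1}^{(n)}\delta_{\uhat_{k+1}^{(n)}}=\sirw_{k+1}^N$, as required.

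The step I expect to require the most care is the identification in the middle: verifying that the empirical measure $S^N P^\pi_{k+1}\sirw_k^N$ has the same distribution as the pairs $(\uhat_{k+1}^{(n)},u_k^{(n)})$ actually produced by the algorithm. The crux is that drawing $N$ \iid pairs from the joint measure genuinely couples each fresh proposal $\uhat_{k+1}^{(n)}$ to the ancestor $u_k^{(n)}$ from which it was drawn, so that when $L^\pi_{k+1}$ evaluates the joint weight function on each atom it recovers precisely $w_{k+1}^{(n),*}$; working only with the marginal on $\CX$ (as for the BPF) would destroy this coupling and hence the correct weights. I would make this rigorous by testing $(S^N P^\pi_{k+1}\sirw_k^N)(\varphi)$ against functions $\varphi$ on $\CX\times\CX$, using the definition of $S^N$ as \iid sampling followed by empirical averaging and the conditional factorization of each draw. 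The remaining manipulations, namely the reweighting by $L^\pi_{k+1}$ and the marginalization $M$, are then direct computations on finitely supported measures.
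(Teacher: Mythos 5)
Your proposal is correct and follows essentially the same route as the paper's proof: push $\sirw_k^N$ through $P^\pi_{k+1}$ to get the joint mixture on $\CX\times\CX$, identify the $N$ \iid draws of $S^N$ with the algorithm's coupled ancestor--proposal pairs $(\uhat_{k+1}^{(n)},u_k^{(n)})$, then check that $L^\pi_{k+1}$ reproduces the normalized SIR weights and $M$ discards the ancestor coordinate. Your emphasis on the coupling of each proposal to its ancestor as the crux is exactly the point the paper's argument relies on.
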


\begin{proof}
By definition, $\sirw_k^N = \sum_{n=1}^N w_k^{(n)} \delta_{\uhat_k^{(n)}}$ so that $P^\pi_{k+1} \sirw_k^N \in \CM (\CX\times \CX) $ with density
\begin{equ}
\sum_{n=1}^N w_k^{(n)} \pi (u_{k+1} | \uhat_k^{(n)},y_{k+1}) \delta(u_{k} -\uhat_{k}^{(n)} )\;.
\end{equ}
Note that a sample $U \sim P^\pi_{k+1} \sirw_k^N $ is a pair $(\uhat_{k+1}^{(n)},u_k^{(n)})$ obtained as follows: first draw a sample $u_k^{(n)}$ from $\{\uhat_k^{(n)}\}_{n=1}^N$ with weights $\{w_k^{(n)}\}_{n=1}^N$ and then draw sample
$\uhat_{k+1}^{(n)}$ from $\pi (u_{k+1} | u_{k}^{(n)},y_{k+1})$. Thus, by definition of the $\uhat_{k+1}^{(n)}$ sequence we see that $S^N P^\pi_{k+1} \sirw_k^N$ has density 
\begin{equ}
\frac{1}{N}\sum_{n=1}^N \delta(u_{k+1 } - \uhat_{k+1}^{(n)})\delta(u_{k } - u_{k}^{(n)})\;.
\end{equ}
It follows that $L^\pi_{k+1} S^N P^\pi_{k+1} \murec_k^N $ has density 
\begin{equ}
\sum_{n=1}^N Z^{-1} w_{k+1}(\uhat_{k+1}^{(n)}, u_k^{(n)})\delta(u_{k+1 } - \uhat_{k+1}^{(n)})\delta(u_{k } - u_{k}^{(n)})
\end{equ} 
and $M L^\pi_{k+1} S^N P^\pi_{k+1} \murec_k^N$ has density 
\begin{equ}
\sum_{n=1}^N Z^{-1} w_{k+1}(\uhat_{k+1}^{(n)}, u_k^{(n)})\delta(u_{k+1 } - \uhat_{k+1}^{(n)})\;.
\end{equ} 
Lastly, the normalization factor is given by 
\begin{equs}
Z = \int \int_{\CX\times \CX} \sum_{n=1}^N w_{k+1}(\uhat_{k+1}^{(n)}, u_k^{(n)})\delta(u_{k+1 } - \uhat_{k+1}^{(n)})\delta(u_{k } - u_{k}^{(n)}) du_{k} du_{k+1} = \sum_{n=1}^N w_{k+1}(\uhat_{k+1}^{(n)}, u_k^{(n)})\;,
\end{equs}
so that $Z^{-1} w_{k+1}(\uhat_{k+1}^{(n)}, u_k^{(n)}) = w_{k+1}^{(n)}$ and we obtain the result.
\end{proof}

In the final step before proving Theorem \ref{thm:sir_consist}, we state some simple properties for the operators appearing in the recursions. Note that these are similar but not (all) immediately implied by the corresponding results for the BPF, Lemma \ref{lem:bpf_est1}. 

\begin{lemma}\label{lem:sir_est1}
We have the following simple estimates:
\begin{enumerate}
\item $d (M\nu ,M \mu) \leq d (\nu,\mu)$.
\item $d (P^\pi_{k+1}\nu ,P^\pi_{k+1}\mu) \leq d (\nu,\mu)$
\item $\sup_{\nu \in \CM(\CX \times \CX)}d (S^N \nu , \nu) \leq N^{-1/2}$
\end{enumerate}
\end{lemma}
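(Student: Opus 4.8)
The plan is to derive all three estimates from a single principle behind the metric $d$: since $d(\mu,\nu)=\sup_{|f|_\infty\le 1}\sqrt{\bbE^\omega|\mu(f)-\nu(f)|^2}$, it suffices, for the first two (deterministic) operators, to show that each acts on measures by \emph{pulling back} test functions bounded by one to test functions bounded by one. Because $M$ and $P^\pi_{k+1}$ carry no sampling randomness, for each realization $\omega$ the algebraic identities below hold pointwise, so $\bbE^\omega$ passes through untouched; this is the only bookkeeping point, and it is immediate. The third estimate is nothing but the sampling bound of Lemma \ref{lem:bpf_est1}(1), now read on the product space.

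For part (1), given $f:\CX\to\reals$ with $|f|_\infty\le 1$, I would introduce its lift $\tilde f(u_{k+1},u_k):=f(u_{k+1})$, which has $|\tilde f|_\infty\le1$. Since $M$ is marginalization onto the first coordinate, $MQ(f)=Q(\tilde f)$ for every $Q\in\CM(\CX\times\CX)$, so
\[
|M\nu(f)-M\mu(f)|=|\nu(\tilde f)-\mu(\tilde f)|.
\]
Taking $\bbE^\omega$, square roots, and the supremum over $|f|_\infty\le1$, and noting that the lifts $\tilde f$ form a subset of the admissible test functions on $\CX\times\CX$, gives $d(M\nu,M\mu)\le d(\nu,\mu)$.

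For part (2), given $g:\CX\times\CX\to\reals$ with $|g|_\infty\le1$, I would set
\[
g^\pi(u_k):=\int_\CX g(u_{k+1},u_k)\,\pi(u_{k+1}\mid u_k,y_{k+1})\,du_{k+1}.
\]
As $\pi(\cdot\mid u_k,y_{k+1})$ is a probability density we have $|g^\pi|_\infty\le|g|_\infty\le1$, and the definition of $P^\pi_{k+1}$ together with Fubini gives $P^\pi_{k+1}\mu(g)=\mu(g^\pi)$. Hence $|P^\pi_{k+1}\nu(g)-P^\pi_{k+1}\mu(g)|=|\nu(g^\pi)-\mu(g^\pi)|$, and the same $\bbE^\omega$--square-root--supremum argument as above, now with $g^\pi$ an admissible test function on $\CX$, yields $d(P^\pi_{k+1}\nu,P^\pi_{k+1}\mu)\le d(\nu,\mu)$.

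For part (3), I would observe that $\CX\times\CX$ is again a finite dimensional Euclidean space, so the proof of Lemma \ref{lem:bpf_est1}(1) applies verbatim: for fixed $f$ with $|f|_\infty\le1$ we have $S^N\nu(f)=\frac1N\sum_{n=1}^N f(U^{(n)})$ with $U^{(n)}\sim\nu$ i.i.d., so $\bbE^\omega S^N\nu(f)=\nu(f)$ and
\[
\bbE^\omega|S^N\nu(f)-\nu(f)|^2=\tfrac1N\bigl(\nu(f^2)-\nu(f)^2\bigr)\le\tfrac1N,
\]
uniformly in $\nu$; taking square roots and the supremum over $f$ proves the bound. None of these steps is a genuine obstacle — the entire content is that marginalization drops a variable and that $\pi$ integrates to one, so boundedness of the pulled-back test functions is preserved.
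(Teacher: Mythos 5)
Your proof is correct and takes essentially the same route as the paper's: part (1) via the lift $\tilde f(u_{k+1},u_k)=f(u_{k+1})$, part (2) via the pulled-back test function $g^\pi(u_k)=\int g(u_{k+1},u_k)\,\pi(u_{k+1}\mid u_k,y_{k+1})\,du_{k+1}$ with $|g^\pi|_\infty\le 1$, and part (3) via the standard Monte Carlo variance bound on the product space. The only difference is cosmetic: you write out the sampling estimate that the paper delegates to a citation of \cite[Lemma 4.7]{LSZ15}.
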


\begin{proof} Let $\ftilde(x,y) = f(x)$ and let $g(x,y)$ denote
an arbitrary function. Then
\begin{equs}
M\nu (f) - M\mu(f) = \nu(\ftilde) - \mu(\ftilde) 
\end{equs}
The first inequality follows immediately from taking supremum over all 
$|f|\leq 1$, which is necessarily smaller than the supremum of 
$\nu(g) - \mu(g)$ over all $|g| \leq 1 $.
\par
We also have 
\begin{equ}
P^\pi_{k+1} \nu (g) - P^\pi_{k+1} \mu(g) = \nu(g^\pi) - \mu(g^\pi)
\end{equ}
where $g^\pi (u_k) = \int g(u_{k+1},u_k) \pi(u_{k+1} | u_k, Y_{k+1}) du_{k+1}$. And since $|g^\pi|_{\infty}\leq 1$, the second inequality follows. The third inequality is proven in \cite[Lemma 4.7]{LSZ15}, simply replacing $\CX$ with $\CX \times \CX$.  
\end{proof}

We can now proceed with the main result. 

\begin{proof}[Proof of Theorem \ref{thm:sir_consist}]
In the context of Lemma \ref{lem:fk}, take $\CZ = \CX \times \CX$ and $g_{k+1} = f_{k+1}$, it follows that $G_{k+1 }\nu = L_{k+1}^\pi \nu$.Indeed, for any $\varphi : \CX \times \CX \to \reals$ and with $g_{k+1} = Z^{-1} w_{k+1}$ we have 
\[
G_{k+1} \nu (\varphi) = \frac{\nu (g_{k+1} \varphi)}{\nu(g_{k+1})} = \frac{\nu (w_{k+1} \varphi)}{\nu(w_{k+1})} = L_{k+1}^\pi \nu (\varphi)\;.
\]
By Assumption \ref{e:gk_kappa_joint}, we therefore obtain from Lemma \ref{lem:fk} that
\[
d (L_{k+1}^\pi \mu , L_{k+1}^\pi \nu) \leq (2\kappa^{-2})d(\mu,\nu)
\]
for all $\mu,\nu\in\CM(\CX\times\CX)$.  
\par
Thus, using the recursions given in Lemmas \ref{lem:sir_rec1}, \ref{lem:sir_rec2} and the estimates given in Lemma \ref{lem:sir_est1}, we obtain
\begin{equs}
d (\sirw_{k+1}^N , \mu_{k+1}) &=  d (M L_{k+1}^\pi S^N P_{k+1}^\pi\sirw_{k}^N , M L_{k+1}^\pi  P_{k+1}^\pi\mu_{k})\\ &\leq d ( L_{k+1}^\pi S^N P_{k+1}^\pi\sirw_{k}^N ,  L_{k+1}^\pi  P_{k+1}^\pi\mu_{k}) \\
& \leq 2\kappa^{-2}d ( S^N P_{k+1}^\pi\sirw_{k}^N ,   P_{k+1}^\pi\mu_{k}) \\
& \leq 2\kappa^{-2} \left( d ( S^N P_{k+1}^\pi\sirw_{k}^N ,   P_{k+1}^\pi\sirw_{k}^N) + d (P_{k+1}^\pi\sirw_{k}^N ,   P_{k+1}^\pi\mu_{k})\right) \\
& \leq 2\kappa^{-2} N^{-1/2} + 2\kappa^{-2} d (\sirw_{k}^N ,   \mu_{k})\;.
\end{equs}
and since $\sirw^N_0 = \mu_0$, we obtain \eqref{e:consist_sir1} by induction. Moreover, since $\sire^N_{k} = S^N \sirw^N_{k}$ 
\begin{equ}
d(\sire_{k}^N , \mu_k ) = d (S^N \sirw_k^N , \mu_k) \leq d (S^N \sirw_k^N , \sirw_k^N) + d (\sirw_k^N , \mu_k) 
\end{equ}
and \eqref{e:consist_sir2} follows. 
\end{proof}

The corollary follows immediately. 
\begin{proof}[Proof of Corollary \ref{cor:consist_opf}]
For the OPF we have 
\begin{equ}
\pi(u_{k+1} | u_k , y_{k+1}) = P(u_{k+1} | u_k , y_{k+1}) = \frac{P(y_{k+1} | u_{k+1}) P(u_{k+1} | u_k)}{P(y_{k+1} | u_k)}
\end{equ}
where we have applied Bayes formula in the final equality. But under Assumption \ref{ass:cond_gauss} we have that 
\begin{equ}
P(y_{k+1} | u_k ) = Z_S^{-1} \exp\left(-\frac{1}{2} |y_{k+1} - H\psi(u_k)|_S^2\right)\;.
\end{equ}
Thus we define $f_{k+1}$ by
\begin{equ}
f_{k+1} (u_{k+1},u_k) = Z_S\frac{P(y_{k+1} | u_{k+1}) P(u_{k+1} | u_k)}{\pi (u_{k+1} | u_k , y_{k+1}) }= \exp\left(-\frac{1}{2} |y_{k+1} - H\psi(u_k)|_S^2\right)
\end{equ}
and hence \eqref{e:gk_kappa_joint} holds with $\kappa^{-1} = \exp \left(  \max_{0\leq j \leq K-1}|y_{j+1}|^2  +  \sup_{v} |H \psi(v)|_S^2\right)$, which, for each $Y_{k}$, is finite by Assumption \ref{ass:cond_gauss}. The result follows from Theorem \ref{thm:sir_consist}. 
\end{proof}

\subsection{Gaussianized Optimal Particle Filter} 
In this section we derive the consistency result for the GOPF.

\begin{thm}
Let $\gopf^N$ be the GOPF defined by \eqref{e:gopf} and let
Assumption \ref{ass:cond_gauss} hold. Then there is 
$\kappa=\kappa(Y_K)$ such that
\begin{equ}
d (\gopf_{K}^N , \mu_K)  \leq  \sum_{k=0}^K (2\kappa^{-2})^k N^{-1/2}
\end{equ}
for all $K,N\geq 1$, where $\kappa^{-1} = \exp \left(  \max_{0\leq j \leq K-1}|y_{j+1}|^2   +\sup_{v} |H \psi(v)|_S^2\right)$.
\end{thm}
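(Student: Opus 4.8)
The plan is to follow the van Handel recursion strategy already used for the BPF (Theorem~\ref{thm:bpf_consist}) and the SIR (Theorem~\ref{thm:sir_consist}), but to exploit the fact that the \emph{weight-resample-propagate} ordering of the GOPF produces a recursion living directly on $\CM(\CX)$, driven by a single sampling operator. Because the equally weighted measure $\gopf_k^N$ needs no associated hatted companion, and because the likelihood weight is evaluated before the proposal step, no lift to the joint space $\CX\times\CX$ is required. In this sense the GOPF is the cleanest of the three filters to analyse.

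First I would introduce two operators on $\CM(\CX)$: the reweighting operator $G_{k+1}$ of Lemma~\ref{lem:fk} with weight $g_{k+1}(u)=P(y_{k+1}|u)$, and the Markov operator $\tilde P_{k+1}$ associated with the optimal proposal kernel $P(u_{k+1}|u_k,y_{k+1})$. The decomposition \eqref{e:opf_bayes} shows at once that the posterior satisfies $\mu_{k+1}=\tilde P_{k+1}G_{k+1}\mu_k$. The central step is to verify the matching recursion for the filter, namely
\[
\mu_{k+1}=\tilde P_{k+1}G_{k+1}\mu_k, \qquad \gopf_{k+1}^N=S^N\tilde P_{k+1}G_{k+1}\gopf_k^N, \quad \gopf_0^N=S^N\mu_0 .
\]
Starting from $\gopf_k^N=\tfrac1N\sum_n\delta_{v_k^{(n)}}$, the weighting step \eqref{e:gopf_weight_cd} gives $G_{k+1}\gopf_k^N=\sum_n w_{k+1}^{(n)}\delta_{v_k^{(n)}}$. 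The resampling draw of $\vtilde_k^{(n)}$ from the weighted particles, followed by the single propagation draw $v_{k+1}^{(n)}\sim P(\,\cdot\,|\vtilde_k^{(n)},y_{k+1})$, produces for each $n$ independently a sample whose law is the mixture $\tilde P_{k+1}G_{k+1}\gopf_k^N=\sum_m w_{k+1}^{(m)}\tilde P_{k+1}(v_k^{(m)},\cdot)$. Hence the composed resample-and-propagate operation is exactly one application of $S^N$ to this mixture, which establishes the recursion.

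Given the two recursions, the bound follows by induction as in the earlier proofs, using three ingredients: that $\tilde P_{k+1}$ is non-expansive in $d$ (the argument of Lemma~\ref{lem:bpf_est}(2), the fixed data $y_{k+1}$ being carried through the test-function bound $|\tilde P_{k+1}f|_\infty\le1$); that $\sup_\nu d(S^N\nu,\nu)\le N^{-1/2}$ (Lemma~\ref{lem:bpf_est}(1)); and that $G_{k+1}$ is Lipschitz with constant $2\kappa^{-2}$, which is Lemma~\ref{lem:fk} applied with $\CZ=\CX$ once \eqref{e:gk_kappa} is checked. The last point is where Assumption~\ref{ass:cond_gauss} enters: by \eqref{e:cond_gaus_Ps} the weight is proportional to $\exp(-\tfrac12|y_{k+1}-H\psi(u)|_S^2)$, and boundedness of $\psi$ yields the two-sided bound with $\kappa^{-1}=\exp\bigl(\max_{0\le j\le K-1}|y_{j+1}|^2+\sup_v|H\psi(v)|_S^2\bigr)$, precisely as in Corollary~\ref{cor:consist_opf}. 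Writing $d_k:=d(\gopf_k^N,\mu_k)$, the triangle inequality together with these three facts gives
\[
d_{k+1}\le d\bigl(S^N\tilde P_{k+1}G_{k+1}\gopf_k^N,\tilde P_{k+1}G_{k+1}\gopf_k^N\bigr)+d\bigl(\tilde P_{k+1}G_{k+1}\gopf_k^N,\tilde P_{k+1}G_{k+1}\mu_k\bigr)\le N^{-1/2}+2\kappa^{-2}d_k .
\]
Since $\gopf_0^N=S^N\mu_0$ gives $d_0\le N^{-1/2}$, unrolling the recursion yields $d_K\le\sum_{k=0}^K(2\kappa^{-2})^kN^{-1/2}$, as claimed.

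The three operator estimates are routine and quoted essentially verbatim from the BPF and SIR analyses. The one genuinely new point — and where I would be most careful — is the identification of the filter recursion, specifically the claim that resampling followed by one propagation draw per particle coincides in law with a single $S^N$ applied to the proposal mixture $\tilde P_{k+1}G_{k+1}\gopf_k^N$. This is exactly the structural advantage of the Gaussianized resampling: because the likelihood weight depends only on $v_k^{(n)}$, which is available before propagation, the filter never needs to retain the pre-proposal state, so the recursion remains on $\CM(\CX)$ rather than on the joint space $\CX\times\CX$ that was forced upon us in the SIR argument.
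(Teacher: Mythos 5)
Your proof is correct and is essentially identical to the paper's: your operators $G_{k+1}$ and $\tilde P_{k+1}$ are exactly the paper's reweighting operator $K_{k+1}$ and proposal Markov operator $Q_{k+1}$, your recursion $\gopf_{k+1}^N = S^N \tilde P_{k+1} G_{k+1}\gopf_k^N$ with $\gopf_0^N = S^N\mu_0$ is precisely the paper's Lemma \ref{lem:gopf_rec2}, and the choice of $\kappa$, the application of Lemma \ref{lem:fk}, and the induction all match the paper's proof step for step. The only (harmless) extra care you take is noting explicitly that non-expansiveness of $\tilde P_{k+1}$ holds with the data $y_{k+1}$ fixed, a point the paper leaves implicit when it cites Lemma \ref{lem:bpf_est1} for the semigroup $P$.
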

For the GOPF, the consistency proof uses the same strategy, but turns out to be much more straightforward. First note that the decomposition of the filtering distribution given in \eqref{e:opf_bayes} gives the recursion formula
\begin{equ}\label{e:gopf_rec1}
\mu_{k+1} = Q_{k+1} K_{k+1} \mu_k 
\end{equ}
where $K_{k+1} : \CM(\CX) \to \CM(\CX)$ is defined by
\begin{equ}
K_{k+1} \mu (A) = Z^{-1}\int_A P(y_{k+1} | u_k) \mu(du_k)
\end{equ}
for all measurable $A \subset \CX$ where $Z$ is the normalization constant, and $Q_{k+1} : \CM(\CX) \to \CM(\CX)$ is the Markov semigroup with kernel $P(u_{k+1} | u_k, y_{k+1})$. 
\par
Moreover, we have the following recursion for the GOPF.
Let $\gopf_k^N = \frac{1}{N}\sum_{n=1}^N \delta_{v_k^{(n)}}.$

\begin{lemma}\label{lem:gopf_rec2} The GOPF $\gopf^N_k$ satsfies the recursion
\begin{equ}\label{e:gopf_rec2}
\gopf^N_{k+1} = S^N Q_{k+1} K_{k+1} \gopf^N_k
\end{equ}
with $\gopf_0^N = S^N\mu_0$\;.
\end{lemma}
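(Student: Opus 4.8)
The plan is to verify the recursion by tracking the action of each operator on the empirical measure $\gopf_k^N = \frac1N\sum_{n=1}^N \delta_{v_k^{(n)}}$ and matching it, stage by stage, against the three steps of the GOPF algorithm (weight, resample, propagate). The base case is immediate: by construction $v_0^{(n)}\sim\mu_0$ i.i.d., so $\gopf_0^N = S^N\mu_0$.

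First I would apply $K_{k+1}$. Since $K_{k+1}$ reweights a measure on $\CX$ by the likelihood $P(y_{k+1}\mid u_k)$ and renormalizes, acting on the atomic measure $\gopf_k^N$ gives
\[
K_{k+1}\gopf_k^N = \sum_{n=1}^N \frac{P(y_{k+1}\mid v_k^{(n)})}{\sum_{m=1}^N P(y_{k+1}\mid v_k^{(m)})}\,\delta_{v_k^{(n)}} = \sum_{n=1}^N w_{k+1}^{(n)}\,\delta_{v_k^{(n)}},
\]
where the $w_{k+1}^{(n)}$ coincide exactly with the GOPF weights defined in step~1. Next, applying the Markov semigroup $Q_{k+1}$ with kernel $P(u_{k+1}\mid u_k,y_{k+1})$ yields the mixture measure on $\CX$,
\[
Q_{k+1}K_{k+1}\gopf_k^N(A) = \sum_{n=1}^N w_{k+1}^{(n)} \int_A P(u_{k+1}\mid v_k^{(n)},y_{k+1})\,du_{k+1}.
\]

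The crux is then to identify $S^N$ applied to this mixture with the output of the resample--propagate steps of the algorithm. A single draw from the mixture is obtained by first selecting an index $n$ with probability $w_{k+1}^{(n)}$ and then sampling from $P(\cdot\mid v_k^{(n)},y_{k+1})$; this is precisely the law of one particle produced by drawing $\vtilde_k$ from $\{v_k^{(m)}\}$ with weights $\{w_{k+1}^{(m)}\}$ (step~2) and then sampling from $P(\cdot\mid\vtilde_k,y_{k+1})$ (step~3). Because the resampling draws and the propagation noises are independent across $n$, the resulting $v_{k+1}^{(n)}$ are i.i.d.\ samples from $Q_{k+1}K_{k+1}\gopf_k^N$, so $\gopf_{k+1}^N = \frac1N\sum_n \delta_{v_{k+1}^{(n)}} = S^N Q_{k+1}K_{k+1}\gopf_k^N$, as claimed.

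I expect the only genuinely delicate point to be this last identification, namely that the sequential resample-then-propagate operation equals a single draw from the mixture $Q_{k+1}K_{k+1}\gopf_k^N$. This is where the reordering underlying the GOPF pays off: because the weights depend only on $v_k^{(n)}$ (through $P(y_{k+1}\mid v_k^{(n)})$) and not on the proposed position, the reweighting $K_{k+1}$ is a genuine operator on $\CM(\CX)$ and the proposal $Q_{k+1}$ is a bona fide Markov kernel. No lifting to the joint space $\CX\times\CX$ is needed, in contrast with the SIR/OPF recursion of Lemma~\ref{lem:sir_rec2}, which accounts for the comparative simplicity of this proof.
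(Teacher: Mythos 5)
Your proof is correct and follows essentially the same route as the paper's: apply $K_{k+1}$ to the empirical measure to recover the GOPF weights, apply $Q_{k+1}$ to obtain the mixture $\sum_n w_{k+1}^{(n)} P(\,\cdot\mid v_k^{(n)},y_{k+1})$, and identify an i.i.d.\ draw from this mixture with the resample-then-propagate steps of the algorithm. The closing observation about why no lifting to $\CX\times\CX$ is needed matches the paper's own framing of why the GOPF recursion is simpler than that of the SIR/OPF.
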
 
\begin{proof}
Note that $K_{k+1} \in \CM(\CX)$ with density 
\begin{equ}
\sum_{n=1}^N Z^{-1} P(y_{k+1} | v_k^{(n)})\delta(v_{k} - v_{k}^{(n)})\;.
\end{equ}
The normalization constant is given by
\begin{equ}
Z = \int_\CX \sum_{n=1}^N  P(y_{k+1} | v_k^{(n)})\delta(v_{k} - v_{k}^{(n)}) dv_{k}  = \sum_{n=1}^N  P(y_{k+1} | v_k^{(n)})
\end{equ}
and thus $Z^{-1} P(y_{k+1} | v_k^{(n)}) = w_{k+1}^{(n)}$. We then have $Q_{k+1} K_{k+1}\gopf_k^N \in\CM(\CX)$ with density
\begin{equ}
\sum_{n=1}^N w_{k+1}^{(n)} P(v_{k+1} | v^{(n)}_k , y_{k+1}) \;.
\end{equ}
To draw a sample $v_{k+1}^{(n)}$ from this mixture model, we draw $\vtilde_k^{(n)}$ from $\{v_k^{(m)}\}_{m=1}^N $ with weights $\{w_{k+1}^{(m)}\}_{m=1}^N$ and then draw $v_{k+1}^{(n)}$ from $P(v_{k+1} | \vtilde_k^{(n)},y_{k+1})$. It follows that $S^N Q_{k+1} K_{k+1} \gopf_k^N = \gopf_{k+1}^N$. 

\end{proof}


\begin{proof} 
If we let 
\[
g_{k+1} (v_{k}) :=  Z_S P(y_{k+1} | v_k) = \exp\left(-\frac{1}{2} |y_{k+1} - H\psi(v_k)|_S^2\right)\;,
\]
then $g_{k+1}$ satisfies the assumptions of Lemma \ref{lem:fk} with 
$$\kappa^{-1}=\exp \left(  \max_{0\leq j \leq K-1}|y_{j+1}|^2  + \sup_{v} |H \psi(v)|_S^2\right).$$  In particular, since $G_{k+1}\nu = K_{k+1} \nu$, it follows from Lemma \ref{lem:fk} that 
\[
d(K_{k+1}\mu, K_{k+1}\nu) \leq (2\kappa^{-2})d(\mu,\nu)
\]
for all $\mu,\nu \in \CM(\CX)$. 
\par
Using the recursions \eqref{e:gopf_rec1},\eqref{e:gopf_rec2} and the estimates from Lemma \ref{lem:bpf_est1}, we see that  
\begin{equs}
d(\gopf_{k+1}^N, \mu_{k+1} ) &= d(S^N Q_{k+1} K_{k+1} \gopf_k^N , Q_{k+1} K_{k+1} \mu_k) \\
& \leq d(S^N Q_{k+1} K_{k+1} \gopf_k^N , Q_{k+1} K_{k+1} \gopf_k^N) + d(Q_{k+1} K_{k+1} \gopf_k^N , Q_{k+1} K_{k+1} \mu_k)\\ 
& \leq N^{-1/2} + d( K_{k+1} \gopf_k^N ,  K_{k+1} \mu_k)\\
& \leq N^{-1/2} + 2\kappa^{-2} d(\gopf_k^N,\mu_k^N)
\end{equs}
by induction, we obtain
\begin{equ}
d(\gopf_{k+1}^N , \mu_{k+1}^N) \leq \sum_{j=0}^k (2\kappa^{-2})^j N^{-1/2} + (2\kappa^{-2})^{k+1} d(\gopf_{0}^N, \mu_0)\;.
\end{equ}
And the result follows from the fact $d(\gopf_{0}^N, \mu_0) = d(S^N \mu_0 , \mu_0) \leq N^{-1/2}$. 
\end{proof}

\vspace{0.1in}
\noindent{\bf Acknowledgements.} DK was supported as an NYU-Courant instructor.
The work of AMS was supported by EPSRC (EQUIPS Program Grant), DARPA 
(contract W911NF-15-2-0121) and ONR (award N00014-17-1-2079). The authors are grateful to an anonymous referee for comments on an earlier version of this paper
which have led to a correction of the presentation in subsection \ref{ssec:CE},
and pointers to relevant literature that were previously omitted; they
are also grateful to Jonathan Mattingly for discussions relating to ergodicity
of nonlinear Markov processes. The authors also highlight, on the occasion of his $70^{th}$ birthday,
the inspirational work of Andy Majda, in applied
mathematics in general, and in the area of data assimilation and filtering
in particular.

\bibliographystyle{plain}
\bibliography{filter_new}

\begin{thebibliography}{10}

\bibitem{akashi1977random}
Hajime Akashi and Hiromitsu Kumamoto.
\newblock Random sampling approach to state estimation in switching
  environments.
\newblock {\em Automatica}, 13(4):429--434, 1977.

\bibitem{atar1997exponential}
Rami Atar and Ofer Zeitouni.
\newblock Exponential stability for nonlinear filtering.
\newblock In {\em Annales de l'IHP Probabilit{\'e}s et statistiques},
  volume~33, pages 697--725, 1997.

\bibitem{bauer2015quiet}
Peter Bauer, Alan Thorpe, and Gilbert Brunet.
\newblock The quiet revolution of numerical weather prediction.
\newblock {\em Nature}, 525(7567):47--55, 2015.

\bibitem{budhiraja1997exponential}
Amarjit Budhiraja and D~Ocone.
\newblock Exponential stability of discrete-time filters for bounded
  observation noise.
\newblock {\em Systems \& Control Letters}, 30(4):185--193, 1997.

\bibitem{burgers1998analysis}
Gerrit Burgers, Peter Jan~van Leeuwen, and Geir Evensen.
\newblock Analysis scheme in the ensemble {Kalman} filter.
\newblock {\em Monthly weather review}, 126(6):1719--1724, 1998.

\bibitem{chorin2009implicit}
Alexandre~J Chorin and Xuemin Tu.
\newblock Implicit sampling for particle filters.
\newblock {\em Proceedings of the National Academy of Sciences},
  106(41):17249--17254, 2009.

\bibitem{crisan2008stability}
Dan Crisan and Kari Heine.
\newblock Stability of the discrete time filter in terms of the tails of noise
  distributions.
\newblock {\em Journal of the London Mathematical Society}, 78(2):441--458,
  2008.

\bibitem{del2016stability}
Pierre Del~Moral, Aline Kurtzmann, and Julian Tugaut.
\newblock On the stability and the uniform propagation of chaos of extended
  ensemble {Kalman-Bucy} filters.
\newblock {\em arXiv preprint arXiv:1606.08256}, 2016.

\bibitem{douc2009forgetting}
Randal Douc, Gersende Fort, Eric Moulines, and Pierre Priouret.
\newblock Forgetting the initial distribution for hidden {M}arkov models.
\newblock {\em Stochastic processes and their applications}, 119(4):1235--1256,
  2009.

\bibitem{douc2009optimality}
Randal Douc, Eric Moulines, and Jimmy Olsson.
\newblock Optimality of the auxiliary particle filter.
\newblock {\em Probability and Mathematical Statistics}, 29(1):1--28, 2009.

\bibitem{doucet2001introduction}
Arnaud Doucet, Nando De~Freitas, and Neil Gordon.
\newblock An introduction to sequential {Monte Carlo} methods.
\newblock In {\em Sequential Monte Carlo methods in practice}, pages 3--14.
  Springer, 2001.

\bibitem{doucet2000sequential}
Arnaud Doucet, Simon Godsill, and Christophe Andrieu.
\newblock On sequential {Monte Carlo} sampling methods for {Bayesian}
  filtering.
\newblock {\em Statistics and computing}, 10(3):197--208, 2000.

\bibitem{evensen1992using}
Geir Evensen.
\newblock Using the extended {Kalman} filter with a multilayer
  quasi-geostrophic ocean model.
\newblock {\em Journal of Geophysical Research: Oceans}, 97(C11):17905--17924,
  1992.

\bibitem{evensen2003ensemble}
Geir Evensen.
\newblock The ensemble {Kalman} filter: Theoretical formulation and practical
  implementation.
\newblock {\em Ocean dynamics}, 53(4):343--367, 2003.

\bibitem{ghil1991data}
Michael Ghil and Paola Malanotte-Rizzoli.
\newblock Data assimilation in meteorology and oceanography.
\newblock {\em Advances in geophysics}, 33:141--266, 1991.

\bibitem{harlim2010catastrophic}
John Harlim, Andrew~J Majda, et~al.
\newblock Catastrophic filter divergence in filtering nonlinear dissipative
  systems.
\newblock {\em Communications in Mathematical Sciences}, 8(1):27--43, 2010.

\bibitem{houtekamer2005ensemble}
Peter~L Houtekamer and Herschel~L Mitchell.
\newblock Ensemble {Kalman} filtering.
\newblock {\em Quarterly Journal of the Royal Meteorological Society},
  131(613):3269--3289, 2005.

\bibitem{johansen2008note}
Adam~M Johansen and Arnaud Doucet.
\newblock A note on auxiliary particle filters.
\newblock {\em Statistics \& Probability Letters}, 78(12):1498--1504, 2008.

\bibitem{kelly2015concrete}
David Kelly, Andrew~J Majda, and Xin~T Tong.
\newblock Concrete ensemble {Kalman} filters with rigorous catastrophic filter
  divergence.
\newblock {\em Proceedings of the National Academy of Sciences},
  112(34):10589--10594, 2015.

\bibitem{kelly16b}
David Kelly, Eric Vanden-Eijnden, and Jonathan Weare.
\newblock Implicit resampling in the optimal particle filter.
\newblock In Progress.

\bibitem{kelly2014well}
DTB Kelly, KJH Law, and Andrew~M Stuart.
\newblock Well-posedness and accuracy of the ensemble {Kalman} filter in
  discrete and continuous time.
\newblock {\em Nonlinearity}, 27(10):2579, 2014.

\bibitem{kleptsyna2008discrete}
ML~Kleptsyna and A~Yu Veretennikov.
\newblock On discrete time ergodic filters with wrong initial data.
\newblock {\em Probability Theory and Related Fields}, 141(3-4):411--444, 2008.

\bibitem{kunita1971asymptotic}
Hiroshi Kunita.
\newblock Asymptotic behavior of the nonlinear filtering errors of {M}arkov
  processes.
\newblock {\em Journal of Multivariate Analysis}, 1(4):365--393, 1971.

\bibitem{law2013accuracy}
K~Law, AM~Stuart, KC~Zygalakis, et~al.
\newblock Accuracy and stability of the continuous-time {3DVAR} filter for the
  {Navier-Stokes} equation.
\newblock {\em Nonlinearity}, 26(8):2193, 2013.

\bibitem{law2014filter}
KJH Law, D~Sanz-Alonso, A~Shukla, and AM~Stuart.
\newblock Filter accuracy for the {Lorenz} 96 model: fixed versus adaptive
  observation operators.
\newblock {\em Physica D: Nonlinear Phenomena}, 325:1--13, 2016.

\bibitem{law2012analysis}
KJH Law, Abhishek Shukla, and AM~Stuart.
\newblock Analysis of the {3DVAR} filter for the partially observed {Lorenz}
  '63 model.
\newblock {\em Discrete and Continuous Dynamical Systems A}, 34:1061--1078,
  2014.

\bibitem{LSZ15}
Kody Law, Andrew Stuart, and Kostas Zygalakis.
\newblock {\em Data Assimilation}.
\newblock Springer, 2015.

\bibitem{law2014deterministic}
Kody~JH Law, Hamidou Tembine, and Raul Tempone.
\newblock Deterministic mean-field ensemble {Kalman} filtering.
\newblock {\em arXiv preprint arXiv:1409.0628}, 2014.

\bibitem{liu1995blind}
Jun~S Liu and Rong Chen.
\newblock Blind deconvolution via sequential imputations.
\newblock {\em Journal of the american statistical association},
  90(430):567--576, 1995.

\bibitem{lorenc1986analysis}
Andrew~C Lorenc.
\newblock Analysis methods for numerical weather prediction.
\newblock {\em Quarterly Journal of the Royal Meteorological Society},
  112(474):1177--1194, 1986.

\bibitem{majda2012filtering}
Andrew~J Majda and John Harlim.
\newblock {\em Filtering complex turbulent systems}.
\newblock Cambridge University Press, 2012.

\bibitem{tong2016enkf}
Andrew~J Majda and Xin~T Tong.
\newblock Robustness and accuracy of finite ensemble {Kalman} filters in large
  dimensions.
\newblock {\em arXiv}, arXiv:1606.09321, 2016.

\bibitem{moodey2013nonlinear}
Alexander~JF Moodey, Amos~S Lawless, Roland~WE Potthast, and Peter~Jan
  Van~Leeuwen.
\newblock Nonlinear error dynamics for cycled data assimilation methods.
\newblock {\em Inverse Problems}, 29(2):025002, 2013.

\bibitem{pitt1999filtering}
Michael~K Pitt and Neil Shephard.
\newblock Filtering via simulation: Auxiliary particle filters.
\newblock {\em Journal of the American statistical association},
  94(446):590--599, 1999.

\bibitem{poyiadjis2011particle}
George Poyiadjis, Arnaud Doucet, and Sumeetpal~S Singh.
\newblock Particle approximations of the score and observed information matrix
  in state space models with application to parameter estimation.
\newblock {\em Biometrika}, 98(1):65--80, 2011.

\bibitem{vanhandel15}
Patrick Rebeschini, Ramon Van~Handel, et~al.
\newblock Can local particle filters beat the curse of dimensionality?
\newblock {\em The Annals of Applied Probability}, 25(5):2809--2866, 2015.

\bibitem{sanz2015long}
Daniel Sanz-Alonso and Andrew~M Stuart.
\newblock Long-time asymptotics of the filtering distribution for partially
  observed chaotic dynamical systems.
\newblock {\em SIAM/ASA Journal on Uncertainty Quantification},
  3(1):1200--1220, 2015.

\bibitem{sanz}
Daniel Sanz-Alonso and Andrew~M Stuart.
\newblock Long-time asymptotics of the filtering distribution for partially
  observed chaotic dynamical systems.
\newblock {\em SIAM/ASA Journal on Uncertainty Quantification},
  3(1):1200--1220, 2015.

\bibitem{sedigh2012data}
Madineh Sedigh-Sarvestani, David~J Albers, and Bruce~J Gluckman.
\newblock Data assimilation of glucose dynamics for use in the intensive care
  unit.
\newblock In {\em 2012 Annual International Conference of the IEEE Engineering
  in Medicine and Biology Society}, pages 5437--5440. IEEE, 2012.

\bibitem{sedigh2012reconstructing}
Madineh Sedigh-Sarvestani, Steven~J Schiff, and Bruce~J Gluckman.
\newblock Reconstructing mammalian sleep dynamics with data assimilation.
\newblock {\em PLoS Comput Biol}, 8(11):e1002788, 2012.

\bibitem{snyder2011particle}
Chris Snyder.
\newblock Particle filters, the ``optimal'' proposal and high-dimensional
  systems.
\newblock In {\em Proceedings of the ECMWF Seminar on Data Assimilation for
  atmosphere and ocean}, 2011.

\bibitem{snyder2015performance}
Chris Snyder, Thomas Bengtsson, and Mathias Morzfeld.
\newblock Performance bounds for particle filters using the optimal proposal.
\newblock {\em Monthly Weather Review}, 143(11):4750--4761, 2015.

\bibitem{tadic2018asymptotic}
Vladislav~ZB Tadic and Arnaud Doucet.
\newblock Asymptotic properties of recursive maximum likelihood estimation in
  non-linear state-space models.
\newblock {\em arXiv preprint arXiv:1806.09571}, 2018.

\bibitem{thrun2002particle}
Sebastian Thrun.
\newblock Particle filters in robotics.
\newblock In {\em Proceedings of the Eighteenth conference on Uncertainty in
  artificial intelligence}, pages 511--518. Morgan Kaufmann Publishers Inc.,
  2002.

\bibitem{tong2016nonlinear}
Xin~T Tong, Andrew~J Majda, and David Kelly.
\newblock Nonlinear stability and ergodicity of ensemble based {Kalman}
  filters.
\newblock {\em Nonlinearity}, 29(2):657, 2016.

\bibitem{tong2015nonlinear}
Xin~T Tong, Andrew~J Majda, and David Kelly.
\newblock Nonlinear stability of the ensemble {Kalman} filter with adaptive
  covariance inflation.
\newblock {\em Communications in Mathematical Sciences}, 14(5), 2016.

\bibitem{tong2012ergodicity}
Xin~Thomson Tong and Ramon Van~Handel.
\newblock Ergodicity and stability of the conditional distributions of
  nondegenerate {Markov} chains.
\newblock {\em The Annals of Applied Probability}, pages 1495--1540, 2012.

\bibitem{van2009stability}
Ramon Van~Handel.
\newblock The stability of conditional {Markov} processes and {Markov} chains
  in random environments.
\newblock {\em The Annals of Probability}, pages 1876--1925, 2009.

\bibitem{van2009particle}
Peter~Jan Van~Leeuwen.
\newblock Particle filtering in geophysical systems.
\newblock {\em Monthly Weather Review}, 137(12):4089--4114, 2009.

\bibitem{van2010nonlinear}
Peter~Jan van Leeuwen.
\newblock Nonlinear data assimilation in geosciences: an extremely efficient
  particle filter.
\newblock {\em Quarterly Journal of the Royal Meteorological Society},
  136(653):1991--1999, 2010.

\bibitem{bayen}
Daniel~B Work, S{\'e}bastien Blandin, Olli-Pekka Tossavainen, Benedetto
  Piccoli, and Alexandre~M Bayen.
\newblock A traffic model for velocity data assimilation.
\newblock {\em Applied Mathematics Research eXpress}, 2010(1):1--35, 2010.

\bibitem{zaritskii1975monte}
VS~Zaritskii, VB~Svetnik, and LI~{\v{S}}imelevi{\v{c}}.
\newblock {Monte-Carlo} technique in problems of optimal information
  processing.
\newblock {\em Avtomatika i Telemekhanika}, (12):95--103, 1975.

\end{thebibliography}

\end{document}